\setlist[enumerate]{leftmargin=.5in}
\setlist[itemize]{leftmargin=.5in}
\crefname{hypothesis}{Hypothesis}{Hypotheses}
\def\imask{\texttt{ImaSk}}
\def\imaskseq{\texttt{ImaSk-Seq}}
\newcommand{\specialcell}[2][c]{%
	\begin{tabular}[#1]{@{}c@{}}#2\end{tabular}}
\tikzstyle{only in spy node magn 1.75}=[%
\tikzstyle{rec}=[draw,rectangle, minimum height=2cm]
\tikzset{>=stealth', punkt/.style={rectangle, 
		fill=gray!40, 
		draw=black, very thick, node distance=45pt, text width=4.5em, minimum height=3em, text centered}}
\tikzset{>=stealth', punkt_l/.style={rectangle, 
		fill=gray!40, 
		draw=black, very thick, text width=3em, minimum height=2.5em, text centered}}
\tikzstyle{background}=[rectangle,fill=green!20,inner sep=0.1cm,rounded corners=4mm,minimum height=3em]
\tikzstyle{sum} = [draw, fill=gray!40, circle, node distance=1cm]
\tikzstyle{dot} = [circle, fill=black, inner sep=0pt, minimum size=5pt, node contents={}]
\tikzstyle{text_n} = [node distance=30pt, text width=8em, minimum height=2.5em, text centered]
\tikzstyle{text_upn} = [node distance=0pt, text width=12em, minimum height=2.5em, text centered]
\tikzstyle{fig_n} = [node distance=0pt, inner sep=0cm]
\def\*#1{\mathbf{#1}}
\def\+#1{\mathcal{#1}}
\def\-#1{\mathbb{#1}}
\def\R{\mathbb{R}}
\def\RI{\mathbb{R}_\infty}
\def\XX{\mathcal{X}}
\def\YY{\mathcal{Y}}
\def\ZZ{\mathcal{Z}}
\def\N{\mathbb{N}}
\def\Expect{\mathbb{E}}
\renewcommand{\tilde}{\widetilde}
\algnewcommand\INPUT{\item[\textbf{Input:}]}%
\algnewcommand\OUTPUT{\item[\textbf{Output:}]}%
\def\it{k}
\def\It{K}
\title{Stochastic Multiresolution Image Sketching for Inverse Imaging Problems\thanks{Submitted to the editors of SIIMS.
\funding{
The authors acknowledge support from the Royal Academy of Engineering under the RAEng / Leverhulme Trust Research Fellowships programme (award LTRF-2324-20-160), the Leverhulme Trust (ECF-2019-478), the Philip Leverhulme Prize, the Royal Society Wolfson Fellowship, the EPSRC advanced career fellowship EP/V029428/1, the EPSRC programme grant EP/V026259/1, and the EPSRC grants EP/S026045/1, EP/T026693/1, EP/T003553/1, EP/N014588/1, EP/T017961/1, the Wellcome Innovator Awards 215733/Z/19/Z and 221633/Z/20/Z, the European Union Horizon 2020 research and innovation programme under the Marie Skodowska-Curie grant agreement NoMADS and REMODEL, the Cantab Capital Institute for the Mathematics of Information and the Alan Turing Institute.
This research was also supported by the NIHR Cambridge Biomedical Research Centre (NIHR203312). The views expressed are those of the author(s) and not necessarily those of the NIHR or the Department of Health and Social Care.}}}
\author{Alessandro Perelli\thanks{School of Science and Engineering, University of Dundee, DD1 4HN, UK (\email{aperelli001@dundee.ac.uk}).}
\and Carola-Bibiane Sch{\"o}nlieb\thanks{Department of Applied Mathematics and Theoretical Physics, University of Cambridge, UK (\email{cbs31@cam.ac.uk}).}
\and Matthias J. Ehrhardt\thanks{Department of Mathematical Sciences, University of Bath, UK (\email{m.ehrhardt@bath.ac.uk})}}
\pgfplotsset{compat=1.18} 
\begin{document}

\maketitle

\begin{abstract}
A challenge in high-dimensional inverse problems is developing iterative solvers to find the accurate solution of regularized optimization problems with low computational cost. An important example is computed tomography (CT) where both image and data sizes are large and therefore the forward model is costly to evaluate. Since several 
years algorithms from stochastic optimization are used for tomographic image reconstruction with great success by subsampling the data. Here we propose a novel way how stochastic optimization can be used to speed up image reconstruction by means of image domain sketching such that at each iteration an image of different resolution is being used. Hence, we coin this algorithm \imask. By considering an associated saddle-point problem, we can formulate \imask~as a gradient-based algorithm where the gradient is approximated in the same spirit as the stochastic average gradient amélioré (SAGA) and uses at each iteration one of these multiresolution operators at random. We prove that \imask~is linearly converging for linear forward models with strongly convex regularization functions. Numerical simulations on CT show that \imask~is effective and increasing the number of multiresolution operators reduces the computational time to reach the modeled solution. 	
\end{abstract}

\begin{keywords}
Inverse problems, X-ray Computed Tomography, Saddle-point optimization, Multiresolution operators, Stochastic average gradient
\end{keywords}

\begin{AMS}
	47A52, 49M30, 65J20, 65N55, 68U10
\end{AMS}

\section{Introduction}

Inverse problems involving estimating unknown images from noisy measurements arises in many medical and non-medical imaging applications such as X-ray Computed Tomography (CT), Positron Emission Tomography (PET) and Magnetic Resonance Imaging (MRI).
If the noise is modeled as Gaussian, then the problem can be mathematically described as $\*b = \*K\*x + \*n$ where $\*K\in\R^{m\times d}$ represents the physical forward model and $\*n\in\R^m$ the mean-zero Gaussian noise. Nowadays with the deluge of acquired data and the aim to reconstruct images at fine resolution, an emerging challenge is constituted by reducing the computational cost while preserving the accuracy of the solution \cite{arridge2019solving}.

An established framework for estimating the unknown high-dimensional vector $\*x$ from the measurements $\*b$ is to solve a model-based optimization problem of the form
\begin{equation}\label{eq:min_obj}
\min_{\*x\in\R^d} \frac12 \|\*K\*x - \*b\|^2 + R(\*x)
\end{equation}
where $R : \R^d \to \RI := \R \cup \{\infty\}$ describes the regularization term which enforce a particular structure on the possible solutions for $\*x$ \cite{fessler2010model}. 
Many iterative solvers have been developed to minimize the original model-based cost function \cite{kim2016}. 
First-order methods and its accelerated versions generally involve operations like matrix-vector multiplications and evaluations of proximal operators \cite{combettes2011proximal}, which are for many problems in closed form or easy to compute iteratively \cite{beck2009fastshrink}.
From a computational point of view, a main bottleneck of any iterative solvers used to minimize \cref{eq:min_obj} is the application of the forward operator $\*K$ and its adjoint \cite{ahmad2020plug, perelli2020compressive}.

A popular approach to overcome this computational bottleneck is to change the discretisation of the image. E.g.\ in \cite{Lesonen2024} chooses a nonuniform triangular grid for PET reconstruction where the resolution is selected a-priori from a second modality. An inexact multi-level version of FISTA is proposed in \cite{Lauga2023}.

A similar concept was previously exploited by multi-grid reconstruction methods which are based on partitioning the image of interest in order to reduce the computation time and memory required \cite{marlevi2019}, since in practical settings often only a specific region of interest needs to be analyzed at full resolution. Alternative methods are based on the multiresolution wavelet structure \cite{vonesch2009} which allows to iteratively impose regularization constraints on an hierarchy of fine to coarse resolution images.

Recently a wide area of research is constituted by the development of scalable gradient-based algorithms which use only a random subset of the measurements at each iteration; notable methods are stochastic gradient descent (SGD) \cite{kim2014} and other Stochastic Variance Reduced methods \cite{xiao2014, gower2020} such as the improved stochastic average gradient (SAGA) \cite{defazio2014} which can reduce the variance of the stochastic gradient estimator progressively and improve the convergence of SGD methods. Stochastic algorithms have also been applied to saddle-point optimization problems \cite{Balamurugan2016, Chambolle2011pdhg, ehrhardt2019faster, Devraj2019}. 

Furthermore, different types of dimensionality reduction operators, called sketches, in the measurement domain have been developed within iterative convergent algorithms \cite{woodruff2014sketching}. Random sketches are used in second-order method for optimization to perform approximate Newton step using a randomly projected or sub-sampled Hessian \cite{pilanci2017newton}. 

Randomized algorithms using sketched gradients are leveraged to solve least squares problems or to reduce the computation of proximal algorithms \cite{tang2017gradient} or optimization using regularization by denoising \cite{perelli2021regularization}. Over the last years, several of the aforementioned algorithms have been applied to imaging problems, most notably to CT and PET, see e.g.\ \cite{ehrhardt2019faster, Twyman2022, Tang2019} but also other imaging modalities like magnetic particle imaging \cite{Zdun2021}. For an overview on the field and further references, we refer to \cite{ehrhardt2024guide}. As in machine learning, all of these approaches randomly sample the data. In contrast, we use a novel random sampling in image space.

\subsection{Contribution}

Inspired by the notion of sketching, we propose a novel algorithm that uses cheaper matrix-vector products essentially by sketching in the image domain. To this end we decompose the forward operator using a family of multiresolution operators and consider the corresponding saddle point formulation. The latter reveals a structure commonly encountered in stochastic optimization which can be approached by various gradient estimators such as SAGA. Note that the original formulation does not the reveal the averaging property that is used for efficient stochastic gradient based solvers. From a computational perspective, this allows to reduce the per-iteration cost of the iterative algorithm and thereby potentially an overall more efficient algorithm. We analyze its linear convergence rate and give sufficient conditions on its parameters for convergence. While this approach is similar to \cite{Balamurugan2016}, our analysis reveals more general conditions and better convergence rates. We also study the behavior of the algorithm for CT image reconstruction in terms of the number of resolutions, step-size, convergence rate and computational time.

\section{\imask~Algorithm}\label{sec:ImaSk_algorithm}
In this section we present the proposed algorithm for image reconstruction, i.e., for solving \cref{eq:min_obj}. 

\subsection{The Idea in a Nutshell}
As outlined before, we aim to exploit sketching in the image domain. To this end, we define a family of operators $\{\*S_i\}$ which significantly make the evaluation of $\*K \*x$ computationally cheaper, i.e., $\*K \* S_i \*x$ is cheaper than $\*K \*x$, and does not introduce bias in the solution, i.e., \enquote{on average} they cancel another, 
\begin{align}
    \mathbb E_i \*S_i = \sum_{i=1}^r p_i \*S_i = \*I \label{eq:si},
\end{align}
and are similar to $\*K$, i.e., $\*K \*x \approx \*K \* S_i \*x$. An example family of operators $\{\*S_i\}$ is illustrated in \cref{fig:CTsketch_ASTRA_Idecim}. We see that the original image on the left is approximated by lower and lower resolutions. This is compensated by the image on the right which essentially is a sharpened version of the ground truth.
\begin{figure*}[!ht]
	\centering
	\small\addtolength{\tabcolsep}{-9pt}
	\renewcommand{\arraystretch}{0.1}
	\begin{tabular}{|c|c|c|c|c|}
		\multicolumn{1}{c}{Ground truth}
		& 
		\multicolumn{4}{c}{$\*S_i\*x^*$} \vspace{.15cm}\\
		
		\multicolumn{1}{c}{} & \multicolumn{4}{c}{\downbracefill} \vspace{.15cm}\\
		\hline
		
		\small{(a) $d = 512^2$} 
		& 
		\small{(b) $d_i = 256^2$}
		& 
		\small{(c) $d_i = 128^2$} 
		& 
		\small{(d) $d_i = 64^2$} 
		& 
		\small{(e) $d_i = 512^2$}  \\
		
		\begin{tikzpicture}
			\begin{scope}[spy using outlines={rectangle,yellow,magnification=3,size=12mm,connect spies}]   
    \node {\includegraphics[trim=30 60 30 60, clip, width=0.2\textwidth]{./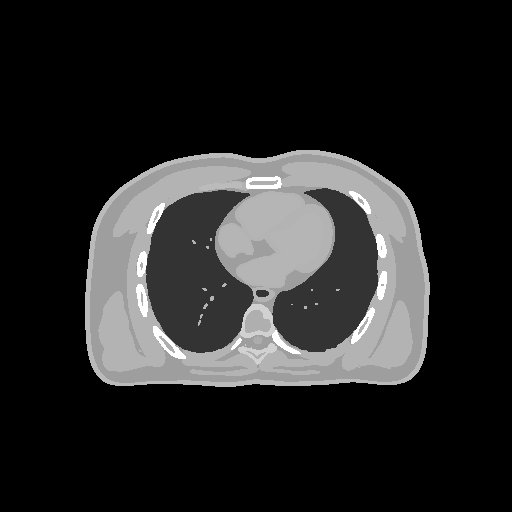}}; 
    \spy on (0,-.5) in node [left] at (-0.3,-1.8); 
    \end{scope}
    \end{tikzpicture} 
    &
    \begin{tikzpicture}
    \begin{scope}[spy using outlines={rectangle,yellow,magnification=3,size=12mm,connect spies}]   
    \node {\includegraphics[trim=30 60 30 60, clip, width=0.2\textwidth]{./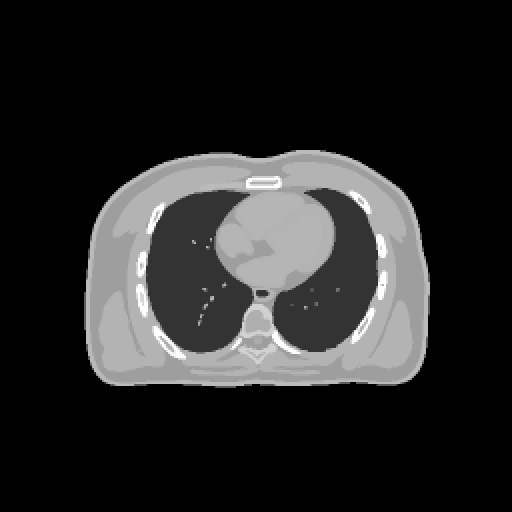}};
    \spy on (0,-.5) in node [left] at (-0.3,-1.8); 
    \end{scope}
    \end{tikzpicture} 
    &
    \begin{tikzpicture}
    \begin{scope}[spy using outlines={rectangle,yellow,magnification=3,size=12mm,connect spies}]   
    \node {\includegraphics[trim=30 60 30 60, clip, width=0.2\textwidth]{./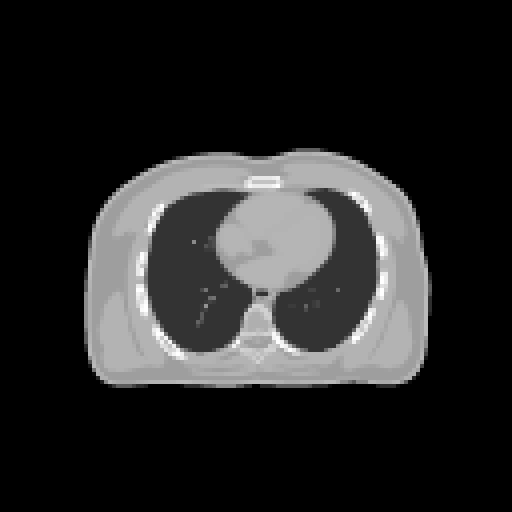}};
    \spy on (0,-.5) in node [left] at (-0.3,-1.8); 
    \end{scope}
    \end{tikzpicture}  
    &       
    \begin{tikzpicture}
    \begin{scope}[spy using outlines={rectangle,yellow,magnification=3,size=12mm,connect spies}]   
    \node {\includegraphics[trim=30 60 30 60, clip, width=0.2\textwidth]{./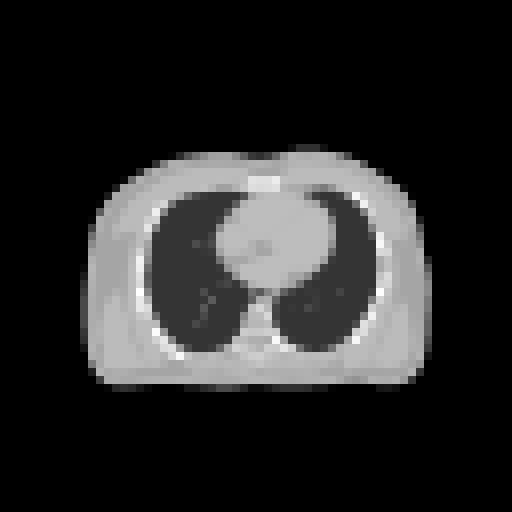}};
    \spy on (0,-.5) in node [left] at (-0.3,-1.8); 
    \end{scope}
    \end{tikzpicture} 
    &                             
    \begin{tikzpicture}
    \begin{scope}[spy using outlines={rectangle,yellow,magnification=3,size=12mm,connect spies}]
    \node {\includegraphics[trim=30 60 30 60, clip, width=0.2\textwidth]{./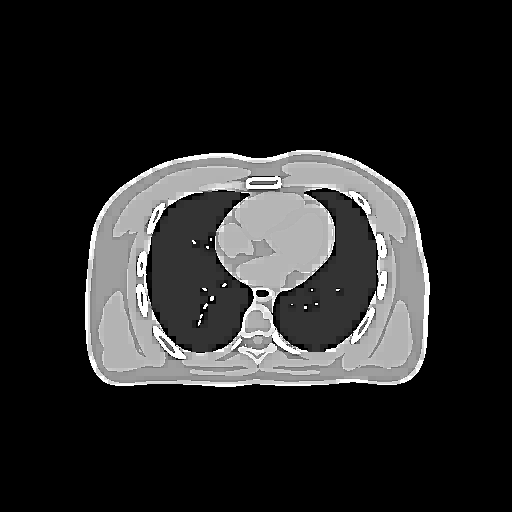}};
    \spy on (0,-.5) in node [left] at (-0.3,-1.8); 
    \end{scope}
    \end{tikzpicture}   
    \\  \hline
    \end{tabular}
    \caption{Illustration of multiresolution sketches. The sketches (here $r=4$) are averaging over small neighborhoods, see \cref{sec:sketches} for details. (a) Ground truth image $\*x^*$ at full resolution $d = 512^2$, (b, c, d) increasingly lower resolution mappings $\* S_i \*x^*$ and (e) resolution compensation. All images are shown on the same color range $[-0.1, 1.1]$.} \label{fig:CTsketch_ASTRA_Idecim}
\end{figure*}

Then, with $\*K_i := \*K \*S_i$, we aim to find solutions of \cref{eq:min_obj} via 
\begin{equation}\label{eq:min_obj_new}
\min_{\*x\in\R^d} \frac12 \left\|\sum_{i=1}^r p_i \*K_i\*x - \*b\right\|^2 + R(\*x)
\end{equation}
where, for computational efficiency, the algorithm is supposed to only evaluate one $\* K_i$ per iteration. Note that the structure of \cref{eq:min_obj_new} with the sum inside the nonlinear function is different from the usual template in stochastic optimization, see e.g., \cite{defazio2014, chambolle2018}. \imask~handles this by considering the associated saddle-point problem, see also \cref{sec:saddle-point}.

\subsection{The Algorithm}
Given the operators $\*K_i$, we propose to solve the optimization problem \cref{eq:min_obj} using the primal-dual algorithm \imask~whose pseudo-code is listed in \cref{table:ImaSk}.
\begin{algorithm}[!h]
    \caption{\imask~for CT image reconstruction, i.e., solve \cref{eq:min_obj}. Default values in brackets.} \label{table:ImaSk}
    \begin{algorithmic}[1]
    \REQUIRE number of iterations $\It$, probabilities $p = (p_i)_{i=1}^r \in (0,1]^r$, step size $\sigma$, strong convexity of regularizer $\mu$
    \INPUT initial iterate $\*x^0 (= \*0) \in \R^d$, $\*y^0 (= \* 0) \in \R^m$, \newline memory $\*\phi^{0} = (\*K_i\*x^0)_{i=1}^r (= \*0) \in (\R^{d})^r$, $\*\psi^{0} = (\*K_i^*\*y^0)_{i=1}^r (= \*0) \in (\R^m)^r$
    \OUTPUT $\*x^\It$
    \FOR{$k = 0, \ldots, \It-1$}
    \STATE Sample $i_\it\in \{1, \dots, r\}$ at random with probability $p$.
    \STATE $\*\phi^{\it+1}_i = \begin{cases} \*K^T_{i_\it} \*y^\it & \text{if $i = i_\it$} \\ \*\phi^\it_i & \text{else} \end{cases}$
    \STATE $\*\psi^{\it+1}_i = \begin{cases} \*K_{i_\it} \*x^\it & \text{if $i = i_\it$} \\ \*\psi^\it_i & \text{else} \end{cases}$
    \STATE $\*\xi^\it = \*\phi^{\it+1}_{i_\it} - \*\phi^\it_{i_\it} + \sum_{i=1}^r p_i \*\phi^\it_i$
    \STATE $\*\zeta^\it = \*\psi^{\it+1}_{i_\it} - \*\psi^\it_{i_\it} + \sum_{i=1}^r p_i \*\psi^\it_i$
    \STATE $\*x^{\it+1} = \mathrm{prox}_{\sigma \mu^{-1} R}\,\left( \*x^\it - \sigma \mu^{-1} \*\xi^\it\right)$
    \STATE $\*y^{\it+1} = (1 +\sigma)^{-1}(\*y^\it + \sigma \*\zeta^\it - \sigma\*b)$
    \ENDFOR
    \end{algorithmic}
\end{algorithm}
\imask~is an iterative solver which selects randomly at each iteration a sketched operator $\*K_i$ for the gradient update; in this way the computational cost of the iteration is reduced according to the selected resolution. At each iteration $\it$, an estimated gradient $\*\xi^\it, \*\zeta^\it$ is computed and the proximal operator for both the data fit and the regularizer applied. Here we make implicitly the assumption that the the proximal operator of $R$ \cite{combettes2011proximal, Bauschke2011, parikh2014proximal}, which is defined as
\begin{equation}\label{eq:prox_op}
    \mathrm{prox}_{\sigma R}(\*x') = \arg\min_{\*x\in\R^d} \left\{\frac{1}{2}\|\*x - \*x'\|^2 + \sigma R(\*x)\right\},
\end{equation}
can be computed in reasonable time. We further assume that the regularizer is $\mu$-strongly convex, i.e., $R - \frac{\mu}{2}\|\cdot\|^2$ is convex. 

Associated to the family of operator sketches $\{\*K_i\}$ are two operators which capture its correlation. To this end, we define $\bar{\*K}_1: \R^d \to (\R^m)^r$ and $\bar{\*K}_2: \R^d \to (\R^m)^r$ as 
\begin{align*}
    (\bar{\*K}_1\*x)_i = p_i^{1/2}\*K_i \*x\quad \text{and}\quad (\bar{\*K}_2\*x)_i = p_i\*K_i \*x .
\end{align*}
We are now in a position to state the main convergence theorem for \imask, which is derived from the more general setting in \cref{thm:normalized}.

\begin{theorem} \label{thm:convImaSk}
    Let $\*x^\it$ be defined by \cref{table:ImaSk} and $\*x^*$ the solution of \cref{eq:min_obj} with a $\mu$-strongly convex regularizer $R$. Then a constant $C > 0$ exists, such that after $\It$ iterations, for any $a, b > 0$ the iterates satisfy
    \begin{align*}
    \Expect \|\*x^\It-\*x^*\|^2 \leq \theta^\It C
    \end{align*}
    with the convergence factor
    \begin{align*}
    \theta := \max\left(\frac{\mu + \sigma^2(\|\*K\|^2 + (1+a)\|\bar{\*K}_1\|^2)}{\mu (1 + \sigma)^2} + \frac{b}{\mu} \|\bar{\*K}_2\*x\|^2, \frac{\left(1+a^{-1}\right)\sigma^2}{b(1 + \sigma)^2} + 1 - \min_i p_i\right).
    \end{align*}
\end{theorem}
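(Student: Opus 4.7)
The plan is to derive this theorem as a corollary of the more general \cref{thm:normalized} (mentioned as already established), by specializing its hypotheses to the precise structure of the saddle-point reformulation of \cref{eq:min_obj_new}.

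First I would rewrite \cref{eq:min_obj_new} as a saddle-point problem. Using Fenchel duality on the quadratic data fit gives
\begin{equation*}
\min_{\*x\in\R^d}\max_{\*y\in\R^m}\;\Big\langle \sum_{i=1}^r p_i \*K_i\*x,\,\*y\Big\rangle - \tfrac12\|\*y\|^2 - \langle \*b,\*y\rangle + R(\*x).
\end{equation*}
The key observation is that the primal-dual optimality system can be written entirely in terms of the sums $\sum_i p_i \*K_i^T \*y$ and $\sum_i p_i \*K_i \*x$, which are exactly what the memory variables $\*\phi^k, \*\psi^k$ in \cref{table:ImaSk} track. The updates of $\*x^{k+1}$ and $\*y^{k+1}$ thus become SAGA-style stochastic proximal-gradient steps on the saddle problem, with $\sigma\mu^{-1}$ and $\sigma$ playing the role of primal and dual step sizes, and where the factor $(1+\sigma)^{-1}$ in the $\*y$-update comes from the proximal operator of the dual of the quadratic term.

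Next I would introduce a Lyapunov function of the form
\begin{equation*}
V^k := \mu \Expect\|\*x^k - \*x^*\|^2 + \Expect\|\*y^k - \*y^*\|^2 + \alpha \sum_{i=1}^r \Expect\|\*\phi^k_i - \*K_i^T \*y^*\|^2 + \beta \sum_{i=1}^r \Expect\|\*\psi^k_i - \*K_i \*x^*\|^2
\end{equation*}
for suitable $\alpha,\beta>0$, and show that one iteration of \imask\ contracts $V^k$ by a factor $\theta$. The unbiasedness of $\*\xi^k,\*\zeta^k$ (i.e.\ $\Expect[\*\xi^k\mid\+F^k] = \sum_i p_i \*\phi^k_i$ and likewise for $\*\zeta^k$) together with \cref{eq:si} ensures that the noise in the SAGA estimator vanishes at the optimum. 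Standard strong-convexity arguments (using $\mu$-strong convexity of $R$ and $1$-strong convexity of $\tfrac12\|\cdot\|^2$) bound the primal and dual progress in terms of $\|\*\xi^k - \sum_i p_i \*K_i^T\*y^*\|^2$ and $\|\*\zeta^k - \sum_i p_i \*K_i\*x^*\|^2$.

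The crux is the variance estimate. By expanding $\*\xi^k - \sum_i p_i\*K_i^T\*y^*$ into the new sample plus the centered average of the memory and taking expectations, one obtains a bound involving $\|\bar{\*K}_1\|^2$ (coming from the second moment of a single sample, which carries weights $p_i$) and $\|\bar{\*K}_2\*x\|^2$ (from the squared-norm of the averaged memory), while the norm of the full operator $\|\*K\| = \|\sum_i p_i \*K_i\|$ appears from the drift term. The two scalars $a,b>0$ arise from Young's inequalities used to split the cross terms in this variance expansion, which explains why the final $\theta$ depends on them. Meanwhile, the memory terms decay per iteration by a factor $(1 - \min_i p_i)$ because each coordinate of $\*\phi,\*\psi$ is refreshed with probability $p_i$. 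Matching the coefficients $\alpha,\beta$ so that the memory contribution in the primal-dual step and the refresh rate of the memory terms admit a common contraction factor yields the $\max$ in the definition of $\theta$: the first slot collects the primal-dual contraction, the second the memory decay. Finally, setting $C := V^0/\min(\mu,\alpha,\beta)$ and iterating the contraction $V^{k+1}\le \theta V^k$ gives the stated bound on $\Expect\|\*x^K-\*x^*\|^2$.

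The main obstacle I anticipate is the careful accounting in the variance bound: tracking which Young parameter multiplies which operator norm so that $\|\*K\|^2$, $\|\bar{\*K}_1\|^2$, and $\|\bar{\*K}_2\*x\|^2$ appear with exactly the coefficients in the theorem, and simultaneously choosing $\alpha,\beta$ so the memory terms enter both sides of the recursion with compatible weights. The rest is standard SAGA-style Lyapunov analysis adapted to the primal-dual setting, closely following the line of reasoning in \cite{Balamurugan2016} but with the sharper bookkeeping that the authors refer to as yielding more general conditions.
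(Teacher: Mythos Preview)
Your plan is correct and matches the paper: \cref{thm:convImaSk} is obtained from \cref{thm:normalized} purely by specialization. The paper's route is more direct than your sketch, though---rather than re-running a Lyapunov argument, it normalizes (Section~\ref{A:scaling}) so both strong-convexity constants equal $1$, sets $\*A_i = p_i\*K_i$, and then simply reads off $L^2 = \|\*K\|^2/\mu$, $\bar L_p^2 = \|\bar{\*K}_1\|^2/\mu$, $\bar L^2 = \|\bar{\*K}_2\|^2/\mu$ to substitute into the $\theta$ of \cref{thm:normalized}; your Lyapunov sketch is effectively re-proving \cref{thm:normalized} itself, which you are told to take as given. (Two small slips to watch if you do write it out: the memory terms in the Lyapunov should carry $p_i$ weights once you unpack $\Gamma_k$ with $\*A_i=p_i\*K_i$, and the conditional expectation of $\*\xi^k$ is $\*K^T\*y^k$, not $\sum_i p_i\*\phi_i^k$.)
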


Note that \cref{thm:convImaSk} does not state the range of permissible step-sizes $\sigma$ such \imask~converges. This is discussed in detail in \cref{sec:range}. Furthermore, optimal step-sizes are discussed in \cref{sec:opt:step}. 

In the following \cref{sec:saddle-point}, we generalize problem \cref{eq:min_obj_new}, propose an algorithm how to solve it and prove its linear convergence. The convergence theory of \imask~then follows as a special case.

\section{Randomized Algorithm for Saddle-Point Problems}\label{sec:saddle-point}

Let $\XX$ and $\YY$ be Hilbert spaces of any dimension and consider 
\begin{equation}
    \min_{\*x\in\XX}\left\{f(\*A\*x) + g(\*x) \right\}, \label{eq:opt:general}
\end{equation}
where $f : \YY \to \RI, g : \XX \to \RI$ are convex functions and $\*A : \XX \to \YY$ is a bounded linear operator. We consider solving the general optimization problem \cref{eq:opt:general} via a family of operators $\*A_i: \XX \to \YY$, $i = 1, \ldots, n$ such that
$\*A = \sum_{i=1}^n \*A_i.$ 
By construction $\*A_i/p_i$ is an unbiased estimator of $\*A$, i.e., $\Expect_{i}\left[\*A_i/p_i \right]=\*A$. Then \cref{eq:opt:general} can be reformulated as
\begin{equation}\label{eq:min_obj_eq}
    \min_{\*x\in\XX}\left\{\Psi(\*x) := f\left(\sum_{i=1}^n \*A_i\*x\right) + g(\*x) \right\} .
\end{equation}
A close look at \cref{eq:min_obj_eq} shows that this objective function $\Psi$ is not affine in the sampling. This prevents the use of traditional stochastic gradient descent (SGD) solvers which usually use "unbiased" samples of gradients.

In order to operate in a regime amendable to current stochastic optimization algorithms, instead of solving \cref{eq:min_obj_eq}, we reformulate the original minimization as a saddle-point problem by using the convex conjugate of $f$ \cite{Bauschke2011}, 
$f^*(\*y) := \sup_{\*z\in\mathrm{dom}f} \Big\{\langle \*z, \*y\rangle - f(\*z)\Big\}$. Using the convex conjugate, it results that solving \cref{eq:min_obj_eq} is equivalent to finding the primal vector $\*x\in\XX$ of a solution to the saddle point problem 
\begin{equation}\label{eq:primal_dual_min}
\min_{\*x\in\XX}\max_{\*y\in\YY} \left\{\sum_{i=1}^n \langle \*y, \*A_i\*x\rangle  - f^*(\*y) + g(\*x) \right\}.
\end{equation}
It is important to remark that the problem \cref{eq:primal_dual_min} cannot be solved by other stochastic primal-dual algorithms, such as the Stochastic Primal-Dual Hybrid Gradient algorithm (SPDHG) \cite{chambolle2004algorithm} since they require the objective to be separable in the dual variable, i.e., $f$ and $\*y$ are indexed by $i$. 

Since the proposed strategy for \cref{eq:primal_dual_min} is built upon SAGA-type gradient estimators, we review the SAGA method for optimization.

\subsection{SAGA for Optimization}
We summarize in this section the SAGA algorithm \cite{defazio2014} for minimizing the optimization problem
\begin{align}\label{eq:opt:saga}
	\min_{\*x \in \XX} \left\{\sum_{i=1}^n f_i(\*x) + g(\*x) \right\}
\end{align}
with $f(\*x) = \sum_{i=1}^n f_i(\*x)$, each $f_i$ being convex with Lipschitz continuous gradient and $g$ is convex but potentially non-differentiable and we assume that the proximal operator of $g$ exists as defined  in \cref{eq:prox_op}. The SAGA algorithm is reported in \cref{table:SAGA_primal}. 
\begin{algorithm}[H]
    \caption{SAGA for optimization \cref{eq:opt:saga}} \label{table:SAGA_primal}
    \begin{algorithmic}[1]
	\REQUIRE number of iterations $\It \in \N$, probabilities $p = (p_i)_{i=1}^n \in [0,1]^n$, step size $\sigma > 0$
	\INPUT initial iterate $\*x^0 \in \XX$, gradient memory $\*\phi^0 = (\nabla f_i(x^0)_{i=1}^n\in \XX^n$ 
	\OUTPUT Reconstructed signal $\*x^\It$ after $\It$ iterations
	\FOR{$\it = 0, \ldots, \It-1$}
	\STATE Sample $i_\it \in \{1, \dots, n\}$ at random with probability $p$.
	\STATE $\phi^{\it+1}_i = \begin{cases} \nabla f_{i_\it}(\*x^\it) & \text{if $i = i_\it$} \\ \phi^\it_i & \text{else} \end{cases}$
	\STATE $\*x^{\it+1} = \mathrm{prox}_{\sigma g}\,\left(\*x^\it - \sigma \left(\frac{1}{p_{i_\it}}(\phi^{\it+1}_{i_\it} - \phi^\it_{i_\it}) + \sum_{i=1}^n \phi^\it_i\right)\right)$
	\ENDFOR
    \end{algorithmic}
\end{algorithm}

SAGA is initialized with some known vector $\*x^0\in \XX$ and known derivatives  $\*\phi^0_i\in \XX^n$ for each $i\in [n]$ and probabilities $p = (p_i)_{i=1}^n \in [0,1]^n$. The derivatives can be stored in a table data-structure of length $n$. At each iteration $\it$ a sample $i_\it$ from the discrete distribution $p$ is selected and the entry $\*\phi^{\it+1}_i$ is updated in the stored table. In line 5 of \cref{table:SAGA_primal}, the vector $\*x$ is updated in the negative direction of the estimated gradient with step size $\sigma$. 

By observing \cref{eq:min_obj_eq}, the primal problem, induced by the image domain sketch, considered in this work is not separable as in \cref{eq:opt:saga} therefore the SAGA algorithm cannot be utilized in the primal domain. 
In the following section we present the SAGA algorithm for solving the saddle point problem \cref{eq:primal_dual_min} which exploit a primal-dual optimization procedure.

\subsection{SAGA for Saddle-Point Problems}
Now consider again the saddle point-minimization problem \cref{eq:primal_dual_min}. 
We extend SAGA to a setting where in each iteration we update a sketched version of the primal and dual variable. The random variable $i$ taking values in $\{1, \dots, n\}$ with probability $p_i$ is associated to $\*A_i$. The idea is that at each iteration of the algorithm, a sample of $i$ with uniform or non-uniform probability $p_i$ is generated and the linear operator $\*A_i$ is selected. The algorithm we propose is formalized in \cref{table:SAGA_mres}. 

\begin{algorithm}[H]
    \caption{SAGA for saddle-point optimization \cref{eq:primal_dual_min}} \label{table:SAGA_mres}
\begin{algorithmic}[1]
    \REQUIRE number of iterations $\It$, probabilities $p = (p_i)_{i=1}^n \in [0,1]^n$, step sizes $\sigma_x, \sigma_y$
    \INPUT initial iterate $\*x^0 \in \XX$, $\*y^0 \in \YY$, memory $\*\phi^{0} = (\*A_i^*\*y^0)_{i=1}^n \in \XX^n$, $\*\psi^{0} = (\*A_i\*x^0)_{i=1}^n \in \YY^n$,   
    \OUTPUT $(\*x^\It, \*y^\It)$
    \FOR{$\it = 0, \ldots, \It-1$}
    \STATE Sample $i_\it \in \{1, \dots, n\}$ at random with probability $p$
    \STATE $(\*\phi^{\it+1}_i, \*\psi^{\it+1}_i) = \begin{cases} (\*A^*_{i_\it} \*y^\it, \*A_{i_\it} \*x^\it) & \text{if $i = i_\it$} \\ (\*\phi^\it_i, \*\psi^\it_i) & \text{else} \end{cases}$
    \STATE $\*x^{\it+1} = \mathrm{prox}_{\sigma_x g}\,\left(\*x^\it - \sigma_x \left( \frac{1}{p_{i_\it}}\left(\*\psi^{\it+1}_{i_\it} - \*\psi^\it_{i_\it}\right) + \sum_{i=1}^n   \*\psi^\it_i\right)\right)$
    \STATE $\*y^{\it+1} = \mathrm{prox}_{\sigma_y f^*}\,\left(\*y^\it + \sigma_y \left(\frac{1}{p_{i_\it}}\left(\*\phi^{\it+1}_{i_\it} - \*\phi^\it_{i_\it}\right) + \sum_{i=1}^n \*\phi^\it_i\right)\right)$
    \ENDFOR
\end{algorithmic}
\end{algorithm}
It is worth mentioning the following observation to reduce the memory requirement.

\begin{remark}
Instead of $\*A_i \*x^\it$ one could also store just $\*x^\it$ depending on what is preferred memory-wise. This can be implemented with the same number of matrix-vector products due to the linearity of $\*A_i$. Of course, a similar argument can be applied regarding $\*A_i^* \*y^\it$ and $\*y^\it$. 
\end{remark}

\subsection{Normalization}\label{A:scaling}
We consider the case when the functions $f^*$ and $g$ are strongly convex but $\mu_{f^*}$ and $\mu_g$ are different. Following \cite{Chambolle2011pdhg, Balamurugan2016} we normalize the problem such that the rescaled versions $\tilde f^*$ and $\tilde g$ have uniform strong-convexity constants equal to one.
\renewcommand{\t}[1]{\*{\tilde #1}}
Let
\begin{align}\label{eq:var_rescale}
    \t A := (\mu_g\mu_{f^*})^{-1/2}\*A,\quad
	{\tilde f}^*(\*y) := f^*\left(\mu_{f^*}^{-1/2} \*y \right), \quad \text{and} \quad \tilde g(\*x) := g \left(\mu_g^{-1/2} \*x \right).
\end{align}
Both $\tilde f^*$ and $\tilde g$ are $1$-strongly convex and $\|\t A\|^2 = \|\*A\|^2/(\mu_g \mu_{f^*})$ is the condition number of the original problem. Moreover, since
\begin{align*}
    \langle\*A \*x, \*y \rangle &= \langle\t A \mu_g^{1/2} \*x, \mu_{f^*}^{1/2} \*y \rangle, \quad
    f^*(\*y) = \tilde f^*\left(\mu_{f^*}^{1/2} \*y\right), \quad \text{and} \quad 
    g(\*x) = \tilde g\left(\mu_g^{1/2} \*x\right),
\end{align*}
problem \cref{eq:primal_dual_min} is equivalent to
\begin{equation}
    \min_{\t x\in\XX}\max_{\t y\in \YY} \left\{\sum_{i=1}^n \langle\t y, \t A \t x\rangle - \tilde f^*\left(\t y\right) + \tilde g\left(\t x\right).\right\} \label{eq:new}
\end{equation}
A solution to \cref{eq:primal_dual_min} can be recovered via $\*x^* = \mu_g^{-1/2} \t x^*$ and $\*y^* = \mu_{f^*}^{-1/2} \t y^*$, where $(\t x^*, \t y^*)$ solves~\cref{eq:new}.

Here we use the rescaling only for the theoretical analysis, but the proximal operator of the scaled variant is as easy to compute as of the original version, e.g.,
\begin{equation*}
    \mathrm{prox}_{\sigma \tilde f^*}(\*p)
	= \mu_{f^*}^{1/2} \mathrm{prox}_{\sigma \mu_{f^*}^{-1} f^*} \left(\mu_{f^*}^{-1/2}\*p \right). 
\end{equation*}
Then the normalized SAGA for saddle-point optimization is given in \cref{alg:NImaSk}. 

\begin{algorithm}
    \caption{Normalized SAGA for saddle-point optimization \cref{eq:new}.} \label{alg:NImaSk}
    \begin{algorithmic}[1]
    \REQUIRE number of iterations $\It$, probabilities $p = (p_i)_{i=1}^n \in [0,1]^n$
    \INPUT initial iterate $\t x^0 \in \XX$, $\t y^0 (= \* 0)$, memory $\t \phi^{0} = (\t A_i\t x^0)_{i=1}^n \in \YY^n$, $\t \psi^{0} = (\t A_i^*\t y^0)_{i=1}^n \in \XX^n$  
    \OUTPUT $(\t x^\It, \t y^\It)$
    \FOR{$\it = 0, \ldots, \It-1$}
    \STATE Sample $i_\it\in \{1, \dots, n\}$ at random with probability $p$
    \STATE $(\t\phi^{\it+1}_i, \t\psi^{\it+1}_i) = \begin{cases} (\t A_{i_\it} \t x^\it, \t A^*_{i_\it} \t y^\it) & \text{if $i = i_t$} \\ (\t\phi^t_i, \t\psi^t_i) & \text{else} \end{cases}$
    \STATE $\t x^{\it+1} = \mathrm{prox}_{\sigma \tilde g}\,\left(\t x^\it - \sigma \left(\frac{1}{p_{i_\it}}\left(\t\psi^{\it+1}_{i_\it} - \t\psi^\it_{i_\it}\right) + \sum_{i=1}^n \t\psi^\it_i\right)\right)$
    \STATE $\t y^{\it+1} = \mathrm{prox}_{\sigma \tilde f^*}\,\left(\t y^\it + \sigma \left(\frac{1}{p_{i_\it}}\left(\t\phi^{t+1}_{i_\it} - \t\phi^t_{i_\it}\right) + \sum_{i=1}^n \t\phi^\it_i\right)\right)$
    \ENDFOR
    \end{algorithmic}
\end{algorithm}

\subsection{Convergence Analysis}\label{ch:convergence}

It is convenient to analyze the convergence via a single-variable formulation, see \cref{alg:NImaSk:z}. We define the joint variable $\*z = (\t x, \t y) \in \ZZ := \XX \times \YY$ where the $\ZZ$ is equipped with the inner product $\langle (\t x, \t y), (\t x', \t y')\rangle = \langle \t x, \t x' \rangle  + \langle \t y, \t y'\rangle$. Then we define 
\begin{align}
    h(\*z) := \tilde g(\t x) + \tilde f^*(\t y) \quad \text{and} \quad \*B := \left[\begin{array}{cc}
    \*0  & \t A^* \\ 
    -\t A & \*0
    \end{array}\right] \label{eq:z:hB}.
\end{align}
We denote its norm by $L := \|\*B\|$. Note that $\* B = \sum_{i=1}^n \* B_i$ with 
\begin{align*}
	\* B_i := \left[\begin{array}{cc}
		\*0  & \t A^*_i \\ 
		-\t A_i & \*0
	\end{array}\right].
\end{align*}

\begin{algorithm}[!ht]
    \caption{Normalized SAGA for saddle-point optimization \cref{eq:new} in joint variable $\*z$.}
	\label{alg:NImaSk:z}
	\begin{algorithmic}[1]
		\REQUIRE number of iterations $\It$, probabilities $p = (p_i)_{i=1}^n \in [0,1]^n$, step size $\sigma > 0$
		\INPUT initial iterate $\*z^0 \in \ZZ$, memory $\* g^{0} = (\* B_i\* z^0)_{i=1}^n \in \ZZ^n$
		\OUTPUT $\* z^\It$
		\FOR{$\it = 0, \ldots, \It-1$}
		\STATE Sample $i_\it\in \{1, \dots, n\}$ at random with probability $p$.
		\STATE $\* g^{\it+1}_i = \begin{cases} \* B_{i_\it} \* z^\it & \text{if $i = i_\it$} \\ \* g^\it_i & \text{else} \end{cases}$
		\STATE $\* z^{\it+1} = \mathrm{prox}_{\sigma h}\,\left(\* z^\it - \sigma \left(\frac{1}{p_{i_\it}}(\*g^{\it+1}_{i_\it} - \*g^\it_{i_\it}) + \sum_{i=1}^n \*g^\it_i\right)\right)$
		\ENDFOR
	\end{algorithmic}
\end{algorithm}

Associated to $\*B$ is two further operators related to the correlations between the $\*B_i$.
Let $\bar{\*B} : \ZZ \to \ZZ^n, (\bar{\*B}\*z)_i := \*B_i \*z$ and its size $\bar{L} := \|\bar{\*B}\|$ will be an important constant in the convergence analysis. Note that $\bar{L}$ can alternatively be defined as
\begin{equation}\label{eq:def_Lbar}
	\bar{L}^2 = \sup_{\|\*z\| \leq 1} \sum_{i=1}^n \|\*B_i\*z\|^2 .
\end{equation}
Furthermore, another important constant will be $\bar L_p$ defined as the norm of $\bar{\*B}_p : \ZZ \to \ZZ^n, (\bar{\*B}_p\*z)_i := p_i^{-1/2} \*B_i \*z$.
The operator $\*B$ has some properties that will be exploited in the convergence analysis.
\begin{lemma} \label{lem:operatorB}
    Let $\*B$ be defined as in \cref{eq:z:hB}. Then $L = \|\t A\|$ and $\*B$ is skew-symmetric, in particular $\langle \*z, \*B\*z \rangle = 0$.
    
    \begin{proof}
    Regarding the first assertion, given $\*z = (\*x, \*y)$, we have
    \begin{align*}
    \|\*B\*z\|^2
    &= \left\| (\t A^*\*y, -\t A\*x) \right\|^2
    = \|\t A^*\*y\|^2 + \|\t A\*x\|^2
    \leq \|\t A\|^2 (\|\*y\|^2 + \|\*x\|^2) = \|\t A\|^2 \|\*z\|^2,
    \end{align*}
    thus, $\|\*B\| \leq \|\t A\|$. Now, for $\*z = (\*x, \*y)$ with $\|\t A\*x\|= \|\t A\| \|\*x\|$ and $\*y = 0$, we have
    \begin{align*}
    \|\*B\*z\| = \|\t A\*x\| = \|\t A\|\|\*x\|= \|\t A\|\|\*z\|.
    \end{align*}
    Therefore, the upped bound is obtained, leading to $L = \|\*B\| = \|\t A\|$.
		
    Regarding the second assertion,
    \begin{align*}
    \langle \*z, \*B\*z \rangle 
    = \langle (\*x, \* y), (\t A^* \*y, -\t A \*x) \rangle
    = \langle \*x, \t A^* \* y \rangle - \langle \*y, \t A \*x \rangle
    = 0.
    \end{align*}
    \end{proof}
\end{lemma}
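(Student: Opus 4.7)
The plan is to establish the two assertions by direct computation on the $2 \times 2$ block structure of $\*B$, working in the product space $\ZZ = \XX \times \YY$ with the inner product defined just before the lemma.

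For the norm equality $L = \|\t A\|$, the approach is a two-sided bound. First I would compute $\*B \*z$ for a generic $\*z = (\*x, \*y)$, which by the block definition gives $(\t A^* \*y, -\t A \*x)$. Using that the product inner product splits as a sum, $\|\*B\*z\|^2 = \|\t A^* \*y\|^2 + \|\t A \*x\|^2$, and then bounding each term by $\|\t A\|^2 \|\*y\|^2$ and $\|\t A\|^2 \|\*x\|^2$ respectively (using that the operator norm of $\t A^*$ equals that of $\t A$), I would collect terms into $\|\t A\|^2 \|\*z\|^2$ to obtain $\|\*B\| \le \|\t A\|$. For the matching lower bound, I would test $\*B$ on vectors of the form $\*z = (\*x, 0)$: then $\|\*B\*z\| = \|\t A\*x\|$ and $\|\*z\| = \|\*x\|$, so taking a supremum (or a maximizing sequence if $\XX$ is infinite-dimensional and the supremum is not attained) yields $\|\*B\| \ge \|\t A\|$.

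For skew-symmetry, the cleanest route is to compute the adjoint of a block operator: the adjoint of a $2 \times 2$ block operator swaps off-diagonal entries and adjoints each block, so $\*B^* = \begin{pmatrix} \*0 & -\t A^* \\ \t A & \*0 \end{pmatrix} = -\*B$. The identity $\langle \*z, \*B \*z\rangle = 0$ then follows either from skew-symmetry directly (since $\langle \*z, \*B\*z\rangle = \langle \*B^* \*z, \*z\rangle = -\langle \*B \*z, \*z\rangle$) or, equivalently and perhaps more transparently, by plugging in: $\langle (\*x,\*y), (\t A^*\*y, -\t A\*x)\rangle = \langle \*x, \t A^*\*y\rangle - \langle \*y, \t A\*x\rangle$, and the two terms cancel by the defining property of the adjoint.

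There is no serious obstacle here: both claims reduce to elementary manipulations once the block structure of $\*B$ and the direct-sum inner product on $\ZZ$ are used. The only minor care needed is in the norm attainment step, where in infinite dimensions one should phrase the lower bound via a supremum rather than a maximizer; this is easily handled by choosing unit vectors $\*x_n$ with $\|\t A \*x_n\| \to \|\t A\|$ and plugging $(\*x_n,0)$ into the quotient $\|\*B\*z\|/\|\*z\|$.
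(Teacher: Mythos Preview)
Your proposal is correct and follows essentially the same route as the paper: a two-sided bound on $\|\*B\|$ via the block computation $\*B(\*x,\*y) = (\t A^*\*y, -\t A\*x)$ together with testing on $\*z = (\*x,0)$, and the direct cancellation $\langle \*x,\t A^*\*y\rangle - \langle \*y,\t A\*x\rangle = 0$ for the second assertion. Your added remark on using a maximizing sequence in the infinite-dimensional case is a nice refinement that the paper omits.
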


\begin{remark}
In order to understand these constants better, consider the case of uniform sampling $p_i = 1/n$ and the same operator $\t A_i = \*C, i = 1, \dots, n$. Then $L = \bar L_p = n \|\*C\|$ and $\bar L = \sqrt{n} \|\*C\|$.
\end{remark}

We are now in a position to state the main convergence theorem for \cref{alg:NImaSk:z}.

\begin{theorem} \label{thm:normalized}
    Let $\* z^k$ be the iterations defined by \cref{alg:NImaSk:z} and $\*z^*$ the saddle point of \cref{eq:new}. Then after $K$ iterations, for any $a, b > 0$ the iterates satisfy
	\begin{align*}
		\Expect \|\*z^K-\*z^*\|^2 \leq \theta^K C
	\end{align*}
	with the convergence factor
	\begin{align*}
		\theta := \max\left(\frac{1 + \sigma^2(L^2 + (1+a)\bar{L}_p^2)}{(1 + \sigma)^2} +  b \bar L^2, \frac{\left(1+a^{-1}\right)\sigma^2}{b(1 + \sigma)^2} + 1 - \min_i p_i\right)
	\end{align*}
	and constant $C := \|\*z^0-\*z^*\|^2 + b \sum_{i=1}^n \frac{1}{p_i} \|\*B_i\*z^* - \*g^0_i\|^2$.
\end{theorem}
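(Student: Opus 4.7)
The plan is to track a SAGA-style Lyapunov function
\[
\Phi^k := \|\*z^k - \*z^*\|^2 + b\sum_{i=1}^n \tfrac{1}{p_i}\|\*B_i \*z^* - \*g^k_i\|^2
\]
and show that $\Expect \Phi^{k+1} \leq \theta\, \Expect \Phi^k$, from which the statement follows by induction with $C=\Phi^0$. Writing $\tilde{\*g}^k := \frac{1}{p_{i_k}}(\*g^{k+1}_{i_k}-\*g^k_{i_k})+\sum_i \*g^k_i$, we have $\Expect_{i_k}[\tilde{\*g}^k]=\*B\*z^k$, and the saddle-point condition gives $\*z^* = \mathrm{prox}_{\sigma h}(\*z^*-\sigma \*B\*z^*)$. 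Since $h$ is $1$-strongly convex after normalization, $\mathrm{prox}_{\sigma h}$ is $(1+\sigma)^{-1}$-contractive, so first I would write
\[
\|\*z^{k+1}-\*z^*\|^2 \leq \frac{1}{(1+\sigma)^2}\bigl\| (\*z^k-\*z^*) - \sigma(\tilde{\*g}^k - \*B\*z^*)\bigr\|^2.
\]

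Next I would expand the square and take conditional expectation in $i_k$. The cross term becomes $\sigma\langle \*z^k - \*z^*, \*B(\*z^k - \*z^*)\rangle$, which vanishes by the skew-symmetry of $\*B$ from \cref{lem:operatorB}. The remaining task is to bound $\Expect_{i_k}\|\tilde{\*g}^k - \*B\*z^*\|^2$. Using the bias-variance decomposition $\Expect\|X\|^2 = \|\Expect X\|^2 + \text{Var}(X)$ with $\Expect \tilde{\*g}^k=\*B\*z^k$ contributes a clean $\|\*B(\*z^k-\*z^*)\|^2 \leq L^2\|\*z^k-\*z^*\|^2$ (giving the $L^2$ inside $\theta$). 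For the variance, I would write $\tilde{\*g}^k - \*B\*z^k = \frac{w_{i_k}}{p_{i_k}}-\sum_i w_i$ with $w_i := \*B_i\*z^k - \*g^k_i$, use $\Expect\|X-\Expect X\|^2 \leq \Expect\|X\|^2$, and then split $w_i = \*B_i(\*z^k-\*z^*) + (\*B_i\*z^* - \*g^k_i)$ by Young's inequality with parameter $a>0$. Identifying the resulting sums with $\|\bar{\*B}_p(\*z^k-\*z^*)\|^2 \leq \bar L_p^2\|\*z^k-\*z^*\|^2$ and the memory term from $\Phi^k$ gives
\[
\Expect_{i_k}\|\tilde{\*g}^k - \*B\*z^*\|^2 \leq \bigl(L^2 + (1{+}a)\bar L_p^2\bigr)\|\*z^k-\*z^*\|^2 + (1{+}a^{-1})\sum_i \tfrac{1}{p_i}\|\*B_i\*z^* - \*g^k_i\|^2.
\]

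For the memory piece I would use that $\*g^{k+1}_i = \*B_i\*z^k$ with probability $p_i$ and is otherwise unchanged, yielding
\[
\Expect_{i_k} \sum_i \tfrac{1}{p_i}\|\*B_i\*z^* - \*g^{k+1}_i\|^2 = \|\bar{\*B}(\*z^k{-}\*z^*)\|^2 + \sum_i \tfrac{1-p_i}{p_i}\|\*B_i\*z^*-\*g^k_i\|^2,
\]
which is at most $\bar L^2\|\*z^k-\*z^*\|^2 + (1-\min_i p_i)\sum_i p_i^{-1}\|\*B_i\*z^* - \*g^k_i\|^2$. Combining this with the bound on $\Expect\|\*z^{k+1}-\*z^*\|^2$ with weight $b$ produces a linear combination whose $\|\*z^k-\*z^*\|^2$ coefficient is exactly $\frac{1+\sigma^2(L^2+(1+a)\bar L_p^2)}{(1+\sigma)^2}+ b\bar L^2$, and whose memory coefficient, once divided by $b$, is $\frac{(1+a^{-1})\sigma^2}{b(1+\sigma)^2} + 1 - \min_i p_i$. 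Taking the maximum of the two gives $\theta$ and closes the recursion $\Expect\Phi^{k+1}\leq \theta\,\Expect\Phi^k$.

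The main obstacle is the variance estimate of $\tilde{\*g}^k$: it requires first separating the bias $\*B(\*z^k-\*z^*)$ (producing $L^2$) from the zero-mean SAGA noise, and then applying Young's inequality with the auxiliary parameter $a$ inside the sum so that both the curvature-like term $\bar L_p^2\|\*z^k-\*z^*\|^2$ and the memory term appear with the right normalization $1/p_i$ needed to align with the Lyapunov function; the parameter $b$ must then be left free so that the two coefficients in $\theta$ can be balanced a posteriori. Tracking these three constants $a, b, \sigma$ consistently, while exploiting $\langle \*z, \*B\*z\rangle = 0$ to eliminate the linear term without requiring co-coercivity, is what distinguishes this rate from the cruder one of \cite{Balamurugan2016}.
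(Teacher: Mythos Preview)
Your proposal is correct and follows essentially the same route as the paper: the paper organizes the argument as two lemmas (one bounding $\Expect_{i_k}\Delta_{k+1}$ via prox-contractivity, skew-symmetry of $\*B$, the variance bound $\Expect\|X-\Expect X\|^2\le\Expect\|X\|^2$, and Young's inequality with parameter $a$; one bounding $\Expect_{i_k}\Gamma_{k+1}$ by the direct memory-update computation), then combines them with weight $b$ to obtain the Lyapunov recursion. Your bias--variance split of $\Expect\|\tilde{\*g}^k-\*B\*z^*\|^2$ and your memory computation are exactly the same estimates written in a slightly different order.
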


We prove \cref{thm:normalized} by combining \cref{lem:step1} and \cref{lem:step2}. In order to simplify the analysis in \cref{lem:step1} we use the following classical lemma which estimates the variance by its second-order moment.
\begin{lemma}\label{lem:var}
	Let be $X$ a random variable, then $\Expect\|X - \Expect X\|^2\leq \Expect\|X\|^2$.
\end{lemma}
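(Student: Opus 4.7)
The plan is to establish the identity $\Expect\|X - \Expect X\|^2 = \Expect\|X\|^2 - \|\Expect X\|^2$ and then drop the non-negative subtracted term. This is the standard variance-vs.-second-moment identity in a Hilbert space; the only thing to check is that the bilinear/inner-product algebra carries through for vector-valued random variables taking values in the ambient Hilbert space (here $\ZZ$), which it does by linearity of expectation.

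Concretely, I would expand the squared norm using the inner product,
\begin{align*}
\Expect\|X - \Expect X\|^2
&= \Expect \langle X - \Expect X,\, X - \Expect X\rangle \\
&= \Expect\|X\|^2 - 2\,\Expect\langle X,\,\Expect X\rangle + \|\Expect X\|^2.
\end{align*}
By linearity of expectation and the fact that $\Expect X$ is deterministic, $\Expect\langle X,\Expect X\rangle = \langle \Expect X,\Expect X\rangle = \|\Expect X\|^2$. Substituting gives
\begin{align*}
\Expect\|X - \Expect X\|^2 = \Expect\|X\|^2 - \|\Expect X\|^2,
\end{align*}
and since $\|\Expect X\|^2 \geq 0$, the claim $\Expect\|X-\Expect X\|^2 \leq \Expect\|X\|^2$ follows immediately.

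There is essentially no obstacle here: the argument is a two-line calculation. The only mild subtlety worth mentioning is to note that the statement is vacuous (or trivially true with both sides infinite) unless $X$ is square-integrable, and that the proof uses only linearity of expectation and bilinearity of the inner product, so it holds equally for scalar-, vector-, or Hilbert space-valued random variables, which is the generality needed for the later application with $X$ taking values in $\ZZ$.
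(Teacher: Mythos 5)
Your proof is correct: the variance identity $\Expect\|X-\Expect X\|^2 = \Expect\|X\|^2 - \|\Expect X\|^2$ followed by dropping the nonnegative term $\|\Expect X\|^2$ is exactly the standard argument, and your remarks about square-integrability and validity in a Hilbert space are apt since the lemma is later applied to $\ZZ$-valued variables. The paper itself gives no proof, describing the result only as a classical estimate of the variance by the second moment, so there is nothing further to compare.
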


\begin{lemma} \label{lem:step1}
    Let $\Expect_{i_\it}$ denote the expectation with respect to $i_\it$ and define the error in the iterates by $\Delta_\it := \|\*z^\it - \*z^*\|^2$ and the error in the gradient by $\Gamma_\it := \sum_{i=1}^n \frac{1}{p_i} \|\*B_i\*z^* - \*g^\it_i\|^2$ where $\*z^\it$ are the iterates defined via \cref{alg:NImaSk:z}.
	
    Then for all $\It \in \mathbb N$ and $a > 0$ it holds that
    \begin{align*}
    \Expect_{i_\it}\Delta_{\it+1} \leq
    \frac{1 + \sigma^2(L^2 + (1+a)\bar{L}_p^2)}{(1 + \sigma)^2} \Delta_\it + \frac{ (1+a^{-1})\sigma^2}{(1 + \sigma)^2} \Gamma_\it .
    \end{align*}
    \begin{proof}
    Note that the saddle point $\*z^*$ of \cref{eq:new} satisfies $\*z^* = \mathrm{prox}_{\sigma h}\big(\*z^* - \sigma\*B\*z^*\big)$. 
    Let $\*f_{i_\it}^\it := \frac{1}{p_{i_\it}}\left(\*B_{i_\it}\*z^\it - \*g_{i_\it}^\it\right)$ with expected value $\Expect_{i_\it} \*f_{i_\it}^\it = \*B\*z^\it - \sum_{i=1}^n\*g^\it_i$. Then we can write $\*z^{\it+1} = \operatorname{prox}_{\sigma h}(\*z^\it - \sigma ( \*f_{i_\it}^\it + \sum_{i=1}^n\*g^\it_i))$. Using the fact that the proximal mapping of strongly convex functions with constant 1 is contractive \cite{parikh2014proximal} in the sense that
    \begin{align*}
    \|\mathrm{prox}_{\sigma h}(\*z) - \mathrm{prox}_{\sigma h}(\*z')\| \leq (1 + \sigma)^{-1}\| \*z - \* z'\|,
    \end{align*}
    we derive the first estimate on the error $\*e^\it := \*z^\it - \*z^*$ as
    \begin{align*}
    \|\*e^{\it+1}\| 
    &\leq \frac{1}{1 + \sigma}\left\|\*z^\it - \sigma \left(\*f_{i_\it}^\it + \sum_{i=1}^n\*g^\it_i\right) - \left(\*z^* - \sigma \*B\*z^*\right)\right\| \\
    &= \frac{1}{1 + \sigma}\left\|\*e^\it - \sigma\*B\*e^\it - \sigma\left( \*f_{i_\it}^\it - \Expect_{j_\it} \*f_{j_\it}^\it\right)\right\|.
    \end{align*}
    Using elementary properties of the expectation yields
    \begin{align}
    \Expect_{i_\it} \Delta_{\it+1} 
    &\leq \frac{1}{(1 + \sigma)^2}\Expect_{i_\it}\left\|\*e^{\it} - \sigma\*B \*e^{\it} + \sigma\left(\*f_{i_\it}^\it - \Expect_{j_\it} \*f_{j_\it}^\it\right) \right\|^2 \nonumber \\
    &= \frac{1}{(1 + \sigma)^2}\|\*e^{\it} - \sigma\*B \*e^{\it}\|^2 + \frac{\sigma^2}{(1 + \sigma)^2} \Expect_{i_\it}\|\*f_{i_\it}^\it - \Expect_{j_\it} \*f_{j_\it}^\it\|^2. \label{eq:exp_sq_norm}
    \end{align}
    We analyze the deterministic and random terms in \cref{eq:exp_sq_norm} separately. Using \cref{lem:operatorB} we can bound the deterministic term as
    \begin{equation}\label{eq:det_term}
	\|\*e^{\it} - \sigma\*B \*e^{\it}\|^2
	= \|\*e^{\it}\|^2 - 2\sigma\langle \*e^{\it}, \*B\*e^{\it}\rangle + \sigma^2\|\*B\*e^{\it}\|^2 \leq (1 + \sigma^2L^2) \Delta_\it.
    \end{equation}
    Using \cref{lem:var} and the definition of the expectation we get
    \begin{align*}
	\Expect_{i_\it}\|\*f_{i_\it}^\it - \Expect_{j_\it} \*f_{j_\it}^\it\|^2
    \leq \Expect_{i_\it} \|\*f_{i_\it}^\it\|^2
    = \sum_{i=1}^n \frac{1}{p_i} \|\*B_i\*z^{\it} - \*g^{\it}_i\|^2.
    \end{align*}
    With $\|\*a + \*b \|^2 \leq (1+a)\|\*a\|^2 + (1+a^{-1})\|\*b\|^2$ for any $a > 0$ we can further estimate
    \begin{align}\label{eqn:rand}
    \Expect_{j_\it}\|\*f_{j_\it}^\it - \Expect_{i_\it} \*f_{i_\it}^\it\|^2 \nonumber
    &\leq \sum_{i=1}^n \frac{1}{p_i} \|\*B_i\*z^\it - \*g^\it_i\|^2
    = \sum_{i=1}^n \frac{1}{p_i} \|\*B_i\*e^\it + \*B_i\*z^* - \*g^\it_i\|^2 \\
    &\leq \sum_{i=1}^n \frac{1 + a}{p_i} \|\*B_i\*e^\it\|^2 + \sum_{i=1}^n \frac{1 + a^{-1}}{p_i} \|\*B_i\*z^* - \*g^\it_i\|^2 \nonumber \\
    &\leq (1+a)\bar{L}_p^2 \Delta_\it + (1+a^{-1}) \Gamma_\it.
    \end{align}
    Inserting \cref{eq:det_term} and \cref{eqn:rand} into \cref{eq:exp_sq_norm} yields the assertion.
    \end{proof}
\end{lemma}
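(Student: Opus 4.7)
The plan is to exploit two structural facts simultaneously: (i) the saddle point $\*z^*$ of \cref{eq:new} is a fixed point of the (deterministic) proximal iteration $\*z \mapsto \mathrm{prox}_{\sigma h}(\*z - \sigma \*B \*z)$, since the first-order optimality conditions rewrite this way and $\*B = \sum_i \*B_i$; and (ii) the stochastic update in \cref{alg:NImaSk:z} can be rewritten as $\*z^{k+1} = \mathrm{prox}_{\sigma h}\bigl(\*z^k - \sigma(\*f^k_{i_k} + \sum_i \*g^k_i)\bigr)$ with $\*f^k_{i_k} := p_{i_k}^{-1}(\*B_{i_k}\*z^k - \*g^k_{i_k})$. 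The crucial observation is that $\Expect_{i_k} \*f^k_{i_k} = \*B\*z^k - \sum_i \*g^k_i$, so the composite estimator $\*f^k_{i_k} + \sum_i \*g^k_i$ is unbiased for $\*B\*z^k$, and the difference of the two prox arguments telescopes to $-\sigma \*B \*e^k - \sigma(\*f^k_{i_k} - \Expect_{j_k}\*f^k_{j_k})$, where $\*e^k := \*z^k - \*z^*$.

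First I would invoke the $(1+\sigma)^{-1}$-contractivity of $\mathrm{prox}_{\sigma h}$ (which holds because $h$ is $1$-strongly convex by the normalization in \cref{A:scaling}) to get
\begin{equation*}
(1+\sigma)^2 \Delta_{k+1} \leq \|\*e^k - \sigma \*B \*e^k - \sigma(\*f^k_{i_k} - \Expect_{j_k} \*f^k_{j_k})\|^2.
\end{equation*}
Taking expectation in $i_k$, the cross term between the deterministic piece $\*e^k - \sigma \*B \*e^k$ and the centered fluctuation $\*f^k_{i_k} - \Expect_{j_k}\*f^k_{j_k}$ vanishes by mean-zero, yielding the clean split
\begin{equation*}
(1+\sigma)^2 \Expect_{i_k} \Delta_{k+1} \leq \|\*e^k - \sigma\*B\*e^k\|^2 + \sigma^2 \Expect_{i_k}\|\*f^k_{i_k} - \Expect_{j_k}\*f^k_{j_k}\|^2 .
\end{equation*}

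Next I would handle the two pieces separately. For the deterministic term, the skew-symmetry of $\*B$ proved in \cref{lem:operatorB} kills $\langle \*e^k, \*B\*e^k\rangle$, so that combined with $\|\*B\| = L$ we obtain $\|\*e^k - \sigma\*B\*e^k\|^2 \leq (1+\sigma^2 L^2)\Delta_k$. For the stochastic term, \cref{lem:var} replaces the variance by the raw second moment $\Expect_{i_k}\|\*f^k_{i_k}\|^2 = \sum_i p_i^{-1}\|\*B_i\*z^k - \*g^k_i\|^2$.

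The step I expect to be the main technical obstacle is connecting this SAGA-style memory term to both $\Delta_k$ and $\Gamma_k$ at once, so that a useful Lyapunov-style recursion can later be formed by pairing with the lemma that bounds $\Gamma_{k+1}$. The key manipulation is adding and subtracting $\*B_i \*z^*$: writing $\*B_i \*z^k - \*g^k_i = \*B_i \*e^k + (\*B_i \*z^* - \*g^k_i)$ lets the SAGA memory error around the optimal gradients appear explicitly. Young's inequality $\|a+b\|^2 \leq (1+a)\|a\|^2 + (1+a^{-1})\|b\|^2$ for any $a>0$ then separates the contributions: the first produces $\sum_i p_i^{-1}(1+a)\|\*B_i\*e^k\|^2 \leq (1+a)\bar L_p^2 \Delta_k$ by the very definition of $\bar L_p$ via $(\bar{\*B}_p\*z)_i = p_i^{-1/2}\*B_i\*z$, while the second produces exactly $(1+a^{-1})\Gamma_k$. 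Dividing through by $(1+\sigma)^2$ then delivers the claimed one-step recursion.
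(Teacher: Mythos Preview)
Your proposal is correct and follows essentially the same route as the paper: the same fixed-point characterization of $\*z^*$, the same auxiliary variable $\*f^k_{i_k}$, the $(1+\sigma)^{-1}$-contractivity of $\mathrm{prox}_{\sigma h}$, the vanishing of the cross term by unbiasedness, the skew-symmetry of $\*B$ for the deterministic piece, \cref{lem:var} for the variance, and the add-and-subtract $\*B_i\*z^*$ followed by Young's inequality to isolate $\bar L_p^2\Delta_k$ and $\Gamma_k$. There is no substantive difference between the two arguments.
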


\begin{lemma} \label{lem:step2}
    Using the same notation as in \cref{lem:step1} it holds that
    \begin{align*}
    \Expect_{i_\it} \Gamma_{\it+1} \leq \bar L^2 \Delta_\it + (1 - \min_i p_i) \Gamma_\it.
    \end{align*}
    \begin{proof}
    To analyze $\Gamma_\it$, we note that $\*g^{\it+1}_i =\*g^\it_i$ if $i\neq i_\it$, while $\*g^{\it+1}_{i_\it} = \*B_{i_\it}\*z^\it$. Thus,
    \begin{align*}
    \Expect_{i_\it} \Gamma_{\it+1}
    &= \Expect_{i_\it}\left(\sum_{i\neq i_\it} \frac{1}{p_i} \|\*B_i\*z^* -\*g^\it_i\|^2 + \frac{1}{p_{i_\it}} \|\*B_{i_\it}\*z^* - \*B_{i_\it}\*z^\it\|^2\right) \\
    &= \sum_{i=1}^n \frac{1}{p_i} \|\*B_i\*z^* -\*g^\it_i\|^2 + \Expect_{i_\it}\left( \frac{1}{p_{i_\it}} \|\*B_{i_\it}\*z^* - \*B_{i_\it}\*z^\it\|^2 - \frac{1}{p_{i_\it}}\|\*B_{i_\it}\*z^* -\*g^\it_{i_\it}\|^2\right) \\
    &= \Gamma_\it + \sum_{i=1}^n \|\*B_i\*e^\it\|^2 - \sum_{i=1}^n \|\*B_i\*z^* -\*g^\it_i\|^2
    \end{align*}
    and the assertion follows.
    \end{proof}
\end{lemma}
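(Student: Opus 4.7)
The plan is to compute $\Expect_{i_\it} \Gamma_{\it+1}$ directly by exploiting the fact that at each iteration only a single coordinate of the gradient memory table is touched. For a fixed index $i$, the update rule in \cref{alg:NImaSk:z} means $\*g^{\it+1}_i = \*B_i \*z^\it$ with probability $p_i$ (when $i_\it = i$) and $\*g^{\it+1}_i = \*g^\it_i$ otherwise. Conditioning on which index is drawn therefore gives a clean per-coordinate identity
\begin{align*}
\Expect_{i_\it} \|\*B_i \*z^* - \*g^{\it+1}_i\|^2 = p_i \|\*B_i(\*z^* - \*z^\it)\|^2 + (1-p_i)\|\*B_i \*z^* - \*g^\it_i\|^2,
\end{align*}
which is the key computational step.

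Next, I would divide this by $p_i$ and sum over $i$ to obtain
\begin{align*}
\Expect_{i_\it} \Gamma_{\it+1} = \sum_{i=1}^n \|\*B_i \*e^\it\|^2 + \sum_{i=1}^n \frac{1-p_i}{p_i}\|\*B_i \*z^* - \*g^\it_i\|^2,
\end{align*}
where $\*e^\it := \*z^\it - \*z^*$. The first sum is exactly the kind of quantity for which the constant $\bar L$ was introduced: by the definition \cref{eq:def_Lbar}, $\sum_i \|\*B_i \*e^\it\|^2 \leq \bar L^2 \|\*e^\it\|^2 = \bar L^2 \Delta_\it$. For the second sum I would simply use the bound $1 - p_i \leq 1 - \min_j p_j$, which pulls a factor of $(1 - \min_i p_i)$ outside and leaves $\sum_i \frac{1}{p_i}\|\*B_i \*z^* - \*g^\it_i\|^2 = \Gamma_\it$. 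Combining the two bounds yields the claimed inequality.

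There is no real obstacle here: the argument is essentially a variance calculation for SAGA-style gradient tables, and the only modelling choice is to recognise that the natural weights $1/p_i$ in $\Gamma_\it$ are exactly what cancels the probability $p_i$ of drawing index $i$, leaving a clean Lipschitz-type bound on the iterate error and a geometric contraction on the memory error. The mildly delicate point is purely bookkeeping: one must be careful that the $\Gamma_\it$ term on the right-hand side picks up the tight factor $1 - \min_i p_i$ rather than some larger quantity, which is the reason for weighting by $1/p_i$ in the definition of $\Gamma_\it$ in the first place.
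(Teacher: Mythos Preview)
Your proposal is correct and follows essentially the same route as the paper: both arguments compute $\Expect_{i_\it}\Gamma_{\it+1}$ exactly from the update rule, arrive at the identity $\Expect_{i_\it}\Gamma_{\it+1} = \sum_i \|\*B_i \*e^\it\|^2 + \sum_i \frac{1-p_i}{p_i}\|\*B_i \*z^* - \*g^\it_i\|^2$ (the paper writes this equivalently as $\Gamma_\it + \sum_i\|\*B_i\*e^\it\|^2 - \sum_i\|\*B_i\*z^*-\*g^\it_i\|^2$), and then apply the bounds $\sum_i\|\*B_i\*e^\it\|^2 \leq \bar L^2 \Delta_\it$ and $1-p_i \leq 1-\min_j p_j$. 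The only difference is organizational: you condition per coordinate $i$, the paper conditions on the sampled index $i_\it$.
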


\begin{proof}[Proof of \cref{thm:normalized}]
    Taking a linear combination of the estimates in \cref{lem:step1} and \cref{lem:step2} yields
    \begin{align*}
    \Expect_{i_\it} (\Delta_{\it+1} + b \Gamma_{\it+1})
    &\leq \left(\frac{1 + \sigma^2(L^2 + (1+a)\bar{L}_p^2)}{(1 + \sigma)^2} +  b \bar L^2\right) \Delta_\it + \left(\frac{\left(1+a^{-1}\right)\sigma^2}{b(1 + \sigma)^2} + 1 - \min_i p_i\right) b \Gamma_\it \\
    &\leq \theta (\Delta_\it + b \Gamma_\it).
    \end{align*}
    Taking expectations, we recursively derive
    \begin{align*}
    \Expect (\Delta_\It + b \Gamma_\It) \leq \theta^\It (\Delta_0 + b \Gamma_0).
    \end{align*}
    The assertion follows since $\Gamma_\It$ is nonnegative.
\end{proof}

\subsection{Range of Admissible Step-Sizes} \label{sec:range} 
In the following we will discuss the consequences of \cref{thm:normalized} on the step-size parameter $\sigma$.
It is convenient to reparameterize the step-size as $\eta = \sigma/(1 + \sigma) \in (0, 1)$ such that $1 - \eta = 1/(1 + \sigma)$
and the convergence factor becomes
\begin{align}\label{eq:theta}
	\theta_{a,b}(\eta) = \max(\phi_{a,b}(\eta), \psi_{a,b}(\eta))
\end{align}
with
\begin{align}\label{eq:phi_psi}
	\phi_{a,b}(\eta) &:= c^{-1} (\eta - c)^2 + 1 + b \bar L^2 - c, \quad \psi_{a,b}(\eta) := \frac{1+a^{-1}}{b} \eta^2 + 1 - \min_i p_i
\end{align}
and $c = (1 + L^2 + (1+a)\bar{L}_p^2)^{-1}$. Note that since $a > 0$ we have $c \in (0, \bar c)$ with $\bar c := (1 + L^2 + \bar{L}_p^2)^{-1} \leq 1$. 
Since $\phi_{a,b}$ is minimized at $c$ with $\phi_{a,b}(c) = 1 + b \bar L^2 - c$, we have immediately a necessary condition on $b$. 
\begin{lemma} \label{lem:nec:b}
    If $b \bar L^2 \geq c$, then for all $\eta \geq 0$ it holds that $\phi_{a,b}(\eta) \geq 1$. Similarly, if $b \bar L^2 < c$, there exists $\eta \geq 0$ such that $\phi_{a,b}(\eta) < 1$.
\end{lemma}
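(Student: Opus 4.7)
The plan is to exploit the fact that $\phi_{a,b}$, as defined in \cref{eq:phi_psi}, is a simple quadratic in $\eta$ with a clearly identifiable minimum. Specifically, $\phi_{a,b}(\eta) = c^{-1}(\eta - c)^2 + 1 + b\bar L^2 - c$ is minimized at $\eta = c$ (since $c^{-1} > 0$), and the minimum value is $\phi_{a,b}(c) = 1 + b\bar L^2 - c$.

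For the first implication, I would note that $c > 0$ (as $c = (1 + L^2 + (1+a)\bar L_p^2)^{-1}$), so $\eta = c$ is an admissible argument, and by the above
\begin{align*}
\phi_{a,b}(\eta) \;\geq\; \phi_{a,b}(c) \;=\; 1 + b\bar L^2 - c \;\geq\; 1
\end{align*}
for all $\eta \geq 0$, using the hypothesis $b\bar L^2 \geq c$.

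For the converse implication, taking the same choice $\eta = c \geq 0$ gives $\phi_{a,b}(c) = 1 + b\bar L^2 - c < 1$ directly from the hypothesis $b\bar L^2 < c$. Thus $\eta := c$ witnesses the claim.

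I do not expect any real obstacle here; the statement is essentially a restatement of the elementary fact that a nonnegative-leading-coefficient quadratic attains its minimum at its vertex, and the only thing one needs to check is $c \geq 0$, which is immediate from the definition of $c$. Consequently this lemma functions as a bookkeeping observation that isolates the necessary condition $b\bar L^2 < c$ on the auxiliary parameter $b$, to be combined later with constraints from $\psi_{a,b}$ for the full discussion of admissible step-sizes.
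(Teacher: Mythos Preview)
Your proposal is correct and follows exactly the reasoning the paper itself gives: the text preceding \cref{lem:nec:b} already notes that $\phi_{a,b}$ is minimized at $\eta=c$ with value $\phi_{a,b}(c)=1+b\bar L^2-c$, and the lemma is stated as an immediate consequence without further proof. Your write-up simply makes this explicit, so there is nothing to add.
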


Due to \cref{lem:nec:b} and the aforementioned discussion, it makes sense to reparametrize $\phi_{a,b}$ and $\psi_{a,b}$. To this end, let $\rho \in (0, 1)$ and $b = \rho c / \bar L^2$. Then we have $b \bar L^2 = \rho c < c$ and the assertion of \cref{lem:nec:b} is naturally satisfied. Using $b = \rho c / \bar L^2$, we arrive at 
\begin{align*}
	\phi_{\rho, c}(\eta) &= c^{-1} (\eta - c)^2 + 1 - (1 - \rho) c.
\end{align*}
Of course, here we abused the notation by reusing the symbol $\phi$ but it should be clear from the context which parametrization is being used.

Similarly, we can express $a^{-1}$ in terms of $c$ as
\begin{align*}
	a^{-1} = \frac{\bar{L}_p^2c}{1 - (1 + L^2 + \bar{L}_p^2)c}
\end{align*}
and thus, with
\begin{align}
    \alpha^{-1} := \frac{1+a^{-1}}{b} 
    = \frac{1 - (1 + L^2)c}{(1 - (1 + L^2 + \bar{L}_p^2)c)b}
    = \frac{\bar L^2}{\rho c} \frac{1 - (1 + L^2)c}{1 - (1 + L^2 + \bar{L}_p^2)c} \label{eq:alpha}
\end{align}
we have
\begin{align*}
    \psi_{c, \rho}(\eta) &= \alpha^{-1}\eta^2  + 1 - \min_i p_i.
\end{align*}

With these reparametrizations, one can readily see the admissible ranges.
\begin{lemma} \label{lem:range:phi}
    Let $c \in (0, \bar c), \rho \in (0, 1)$. Then $\phi_{c, \rho}(\eta) < 1$ if and only if $$\eta \in (c - c \sqrt{1 - \rho}, c + c \sqrt{1 - \rho}).$$
    \begin{proof}
    This result follows immediately from solving the quadratic equation $\phi_{c, \rho}(\eta) = 1$ in terms of $\eta$.
    \end{proof}
\end{lemma}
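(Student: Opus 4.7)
The plan is straightforward: this is an elementary one-variable quadratic inequality, so I would just solve $\phi_{c,\rho}(\eta) < 1$ directly for $\eta$. Starting from the definition
\[
\phi_{c,\rho}(\eta) = c^{-1}(\eta - c)^2 + 1 - (1-\rho)c,
\]
subtracting $1$ from both sides reduces the inequality $\phi_{c,\rho}(\eta) < 1$ to
\[
c^{-1}(\eta - c)^2 < (1-\rho)c.
\]

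Next I would multiply both sides by $c$, which is valid and preserves the inequality because $c \in (0, \bar c) \subset (0, \infty)$. This gives $(\eta - c)^2 < (1-\rho) c^2$. Since $\rho \in (0,1)$ we have $1 - \rho > 0$, so the right-hand side is strictly positive and one may take square roots to obtain the equivalent inequality $|\eta - c| < c\sqrt{1-\rho}$, which is precisely $\eta \in (c - c\sqrt{1-\rho},\, c + c\sqrt{1-\rho})$. Reversing each step shows the implication goes both ways, establishing the "if and only if" statement.

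There is essentially no obstacle here; the only care needed is to confirm that the sign conditions $c > 0$ and $1 - \rho > 0$ allow all algebraic manipulations (multiplication by $c$, taking square roots) to be reversible and to preserve strict inequalities. Since both are guaranteed by the hypotheses $c \in (0, \bar c)$ and $\rho \in (0,1)$, the chain of equivalences is clean, and the proof is a two-line calculation identical to the one indicated in the statement's own short proof.
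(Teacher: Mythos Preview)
Your proposal is correct and is exactly the approach the paper indicates: solving the quadratic $\phi_{c,\rho}(\eta)=1$ (equivalently the inequality $\phi_{c,\rho}(\eta)<1$) for $\eta$, using $c>0$ and $1-\rho>0$ to justify the reversible algebraic steps. You have simply written out in full the two-line computation that the paper leaves implicit.
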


\begin{lemma} \label{lem:range:psi}
    Let $c \in (0, \bar c), \rho \in (0, 1)$. Then $\psi_{c, \rho}(\eta) < 1$ if and only if $\eta < \sqrt{\alpha \min_i p_i}$.
    \begin{proof}
    This result is immediately obtained by solving the quadratic equation $\psi_{c, \rho}(\eta) = 1$ in terms of $\eta$.
    \end{proof}
\end{lemma}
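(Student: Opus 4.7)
The plan is essentially a one-line algebraic manipulation, so I would keep the argument short. Starting from the reparametrized definition $\psi_{c,\rho}(\eta) = \alpha^{-1}\eta^2 + 1 - \min_i p_i$, the inequality $\psi_{c,\rho}(\eta) < 1$ rearranges immediately to $\alpha^{-1}\eta^2 < \min_i p_i$. To then conclude $\eta^2 < \alpha \min_i p_i$, I need $\alpha > 0$ so that multiplying through preserves the inequality.

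The step that requires a brief check is positivity of $\alpha$, which I read off from \cref{eq:alpha}: the factor $\bar L^2/(\rho c)$ is strictly positive since $\rho, c, \bar L^2 > 0$, while the quotient $(1 - (1+L^2)c)/(1 - (1 + L^2 + \bar{L}_p^2)c)$ has both numerator and denominator strictly positive, using $c < \bar c = (1 + L^2 + \bar{L}_p^2)^{-1}$ which implies the stronger bound $(1+L^2)c < 1$ as well. Hence $\alpha^{-1} > 0$ and the rearrangement is valid.

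The final step is to take square roots. Since $\eta = \sigma/(1+\sigma) \geq 0$ and $\alpha \min_i p_i \geq 0$, the map $t \mapsto t^2$ is monotone increasing on the nonnegative reals, so $\eta^2 < \alpha \min_i p_i$ is equivalent to $\eta < \sqrt{\alpha \min_i p_i}$. Each step in the chain is an equivalence, which delivers the "if and only if" statement in both directions. There is no real obstacle here beyond noting the positivity of $\alpha$, which makes this the exact counterpart of \cref{lem:range:phi} but for the simpler quadratic $\psi_{c,\rho}$ which has no linear term in $\eta$.
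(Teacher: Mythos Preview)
Your proposal is correct and follows exactly the approach indicated in the paper's one-line proof, namely solving the quadratic $\psi_{c,\rho}(\eta)=1$ for $\eta$; you simply make explicit the positivity of $\alpha$ and the nonnegativity of $\eta$ that justify the rearrangement and square-root step.
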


\begin{proposition}
    Let $c \in (0, \bar c), \rho \in (0, 1)$. Then $\theta_{c, \rho}(\eta) < 1$ if and only if $$\eta \in (c - c \sqrt{1 - \rho}, c + c \sqrt{1 - \rho}) \cap (0, \sqrt{\alpha} \min_i \sqrt{p_i}).$$
    \begin{proof}
    $\theta_{c, \rho}(\eta) < 1$ is the case if and only if $\phi_{c, \rho}(\eta) < 1$ and $\psi_{c, \rho}(\eta) < 1$. Then the result follows directly from \cref{lem:range:phi} and \cref{lem:range:psi}.
    \end{proof}
\end{proposition}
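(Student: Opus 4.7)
The proposition is essentially a direct consequence of \cref{lem:range:phi} and \cref{lem:range:psi} together with the definition of $\theta_{c,\rho}$ as the max of $\phi_{c,\rho}$ and $\psi_{c,\rho}$. So my plan is mostly bookkeeping rather than fresh analysis.

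First, I would recall from the reparametrization in \cref{eq:theta}–\cref{eq:phi_psi} that
\begin{align*}
\theta_{c,\rho}(\eta) = \max\bigl(\phi_{c,\rho}(\eta),\,\psi_{c,\rho}(\eta)\bigr),
\end{align*}
so the strict inequality $\theta_{c,\rho}(\eta) < 1$ holds if and only if simultaneously $\phi_{c,\rho}(\eta) < 1$ and $\psi_{c,\rho}(\eta) < 1$. This elementary equivalence for the maximum of two real quantities is the only conceptual ingredient beyond the two preceding lemmas.

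Next, I would apply \cref{lem:range:phi} to translate $\phi_{c,\rho}(\eta) < 1$ into the explicit interval $\eta \in (c - c\sqrt{1-\rho},\, c + c\sqrt{1-\rho})$, and \cref{lem:range:psi} to translate $\psi_{c,\rho}(\eta) < 1$ into $\eta \in (0, \sqrt{\alpha \min_i p_i})$, which equals $(0, \sqrt{\alpha}\min_i \sqrt{p_i})$ since the square root is monotone and $\alpha > 0$. Intersecting the two admissible sets yields exactly the stated range.

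There is essentially no obstacle: the heavy lifting (solving the two quadratics $\phi_{c,\rho}(\eta) = 1$ and $\psi_{c,\rho}(\eta) = 1$ and checking signs) is already carried out in the two lemmas, and the definition of the maximum does the rest. The one small thing to note for cleanliness is that the lower endpoint $c - c\sqrt{1-\rho}$ is automatically positive (since $\rho \in (0,1)$ implies $\sqrt{1-\rho} < 1$), so intersecting with $(0, \sqrt{\alpha}\min_i \sqrt{p_i})$ is equivalent to intersecting with $(-\infty, \sqrt{\alpha}\min_i \sqrt{p_i})$, and the written form of the intersection in the statement is meaningful without degenerating.
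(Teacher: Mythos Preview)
Your proposal is correct and follows essentially the same approach as the paper: reduce $\theta_{c,\rho}(\eta) < 1$ to the conjunction $\phi_{c,\rho}(\eta) < 1$ and $\psi_{c,\rho}(\eta) < 1$ via the definition of the maximum, then invoke \cref{lem:range:phi} and \cref{lem:range:psi} and intersect. Your additional remark that $c - c\sqrt{1-\rho} > 0$ is a harmless clarification not present in the paper's terse proof.
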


\subsection{Optimal Step-Sizes} \label{sec:opt:step}
Notice that $\phi_{c, \rho}$ and $\psi_{c, \rho}$ are quadratics with strictly positive quadratic constants. Further since $\phi_{c, \rho}$ is minimized at $c$ with $\phi_{c, \rho}(c) = 1 - (1 - \rho) c$ and $\psi_{c, \rho}$ is minimized at 0 with $\psi_{c, \rho}(0) = 1 - \min_i p_i$, we derive the following characterization of the optimal step-size:
\begin{lemma} \label{lem:opt}
    There is an unique optimal $\eta^* \in [0, c]$ that minimizes the convergence factor~$\theta_{c, \rho}$. Furthermore,
    \begin{align*}
    \theta_{c, \rho}(\eta^*) \geq 1 -  \min\{(1 - \rho) c, p_1, \dots, p_n\}.
    \end{align*}
\end{lemma}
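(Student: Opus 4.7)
The plan is to exploit the structure of $\theta_{c,\rho}$ as the pointwise maximum of two strictly convex parabolas with opposite monotonicity on $[0,c]$, where $\phi_{c,\rho}$ has its vertex at $\eta = c$ and $\psi_{c,\rho}$ has its vertex at $\eta = 0$.

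First I would restrict the minimization to $[0,c]$. Both $\phi_{c,\rho}$ and $\psi_{c,\rho}$ are strictly increasing on $[c,\infty)$ since $c$ is the vertex of $\phi_{c,\rho}$ and $0$ is the vertex of $\psi_{c,\rho}$. Hence $\theta_{c,\rho}$ is strictly increasing on $[c,\infty)$ and any minimizer must lie in $[0,c]$.

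Next, for uniqueness, I would record the boundary values: on $[0,c]$, $\phi_{c,\rho}$ strictly decreases from $\phi_{c,\rho}(0) = 1 + \rho c$ to $\phi_{c,\rho}(c) = 1 - (1-\rho) c$, while $\psi_{c,\rho}$ strictly increases from $\psi_{c,\rho}(0) = 1 - \min_i p_i$ to $\psi_{c,\rho}(c) = \alpha^{-1} c^2 + 1 - \min_i p_i$. Because $\phi_{c,\rho}(0) - \psi_{c,\rho}(0) = \rho c + \min_i p_i > 0$ and $\phi_{c,\rho} - \psi_{c,\rho}$ is strictly decreasing on $[0,c]$, this difference has at most one zero in $[0,c]$. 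In the case $\phi_{c,\rho}(c) \geq \psi_{c,\rho}(c)$, we get $\phi_{c,\rho} \geq \psi_{c,\rho}$ throughout $[0,c]$, so $\theta_{c,\rho} = \phi_{c,\rho}$ is strictly decreasing with unique minimizer $\eta^* = c$. Otherwise, the intermediate value theorem yields a unique crossing $\eta^* \in (0, c)$ with $\phi_{c,\rho}(\eta^*) = \psi_{c,\rho}(\eta^*)$; to its left $\theta_{c,\rho} = \phi_{c,\rho}$ strictly decreases, to its right $\theta_{c,\rho} = \psi_{c,\rho}$ strictly increases, so $\eta^*$ is again the unique minimizer.

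For the lower bound I would simply use the unconstrained minima of each parabola: since $\phi_{c,\rho}$ attains its global minimum at $c$ and $\psi_{c,\rho}$ at $0$,
\[
    \theta_{c,\rho}(\eta^*) \geq \phi_{c,\rho}(\eta^*) \geq \phi_{c,\rho}(c) = 1 - (1-\rho) c,
    \qquad
    \theta_{c,\rho}(\eta^*) \geq \psi_{c,\rho}(\eta^*) \geq \psi_{c,\rho}(0) = 1 - \min_i p_i,
\]
and combining these two lower bounds gives $\theta_{c,\rho}(\eta^*) \geq \max(1-(1-\rho)c,\, 1-\min_i p_i) = 1 - \min\{(1-\rho)c, p_1, \dots, p_n\}$. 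The argument is essentially routine; the only conceptual point is recognizing that the opposite monotonicities of $\phi_{c,\rho}$ and $\psi_{c,\rho}$ on $[0,c]$ deliver a clean case split, after which the lower bound comes for free from minimizing each parabola separately.
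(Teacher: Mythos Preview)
Your proof is correct and follows essentially the same approach as the paper. The paper treats the lemma as an immediate consequence of the observations stated just before it (that $\phi_{c,\rho}$ and $\psi_{c,\rho}$ are upward parabolas with vertices at $c$ and $0$, respectively), and the case split you carry out for uniqueness is precisely the one the paper later uses in the proof of \cref{thm:optsteps}; you have simply made the details explicit.
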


\begin{theorem} \label{thm:optsteps}
    There is an unique optimal $\eta^* \in (0, c]$ that minimizes the convergence factor~$\theta_{c, \rho}$. Furthermore, if 
\begin{align}
\min_i p_i > \left(\frac{\bar L^2}{\rho} \frac{1 - (1 + L^2)c}{1 - (1 + L^2 + \bar{L}_p^2)c} + 1-\rho\right) c, \label{eq:opt:cond}
\end{align}
    then $\eta^* = c$ and $\theta_{c, \rho}(\eta^*) = 1 - (1-\rho) c$.

    Otherwise, 
    \begin{align}
    \eta^* = \frac{\sqrt{1 + (\alpha^{-1}-c^{-1})(\min_i p_i + \rho c)}-1}{\alpha^{-1}-c^{-1}} \label{eq:eta_opt}
    \end{align}
    and $\theta_{c, \rho}(\eta^*) = \phi_{c, \rho}(\eta^*) = \psi_{c, \rho}(\eta^*)$.
    \begin{proof}

    With \cref{lem:opt} and due to monotonicity, there are only three mutually exclusive potential cases:
\begin{enumerate}
\item $\phi_{c, \rho}(\eta) > \psi_{c, \rho}(\eta)$ for all $\eta \in [0, c]$ iff $\phi_{c, \rho}(c) > \psi_{c, \rho}(c)$ iff $\eta^* = c$ and $\theta^* = \phi_{c, \rho}(c)$
\item $\phi_{c, \rho}(\eta) < \psi_{c, \rho}(\eta)$ for all $\eta \in [0, c]$ iff $\phi_{c, \rho}(0) < \psi_{c, \rho}(0)$ iff $\eta^* = 0$ and $\theta^* = \psi_{c, \rho}(0)$
\item $\eta^* \in (0, c)$ and $\theta^* = \phi_{c, \rho}(\eta^*) = \psi_{c, \rho}(\eta^*)$
\end{enumerate}

The first case is equivalent to \cref{eq:opt:cond}.
Note that this is true for sufficiently small $c$.

The second case is equivalent to $\rho c + \min_i p_i < 0$ which is always false.

In the third case, the optimal step size can be computed as in \eqref{eq:eta_opt}.
    \end{proof}
\end{theorem}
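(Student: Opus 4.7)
The plan is to exploit the fact that both $\phi_{c,\rho}$ and $\psi_{c,\rho}$ are convex quadratics with opposing monotonicity on $[0,c]$: the vertex of $\phi_{c,\rho}$ sits at $\eta = c$, making it strictly decreasing on $[0,c)$, while the vertex of $\psi_{c,\rho}$ sits at $\eta = 0$, making it strictly increasing on $(0,c]$. Therefore $\phi_{c,\rho} - \psi_{c,\rho}$ is strictly decreasing on $[0,c]$, which automatically guarantees that any crossing point is unique and reduces the minimax problem to a short case analysis driven only by the endpoint values.

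First I would evaluate the endpoints. At $\eta = 0$ one has $\phi_{c,\rho}(0) = 1 + \rho c > 1 > 1 - \min_i p_i = \psi_{c,\rho}(0)$, so $\phi_{c,\rho}$ always dominates $\psi_{c,\rho}$ at the left endpoint; this rules out the degenerate scenario in which $\psi_{c,\rho}$ dominates throughout $[0,c]$. By strict monotonicity only two genuine cases remain. If $\phi_{c,\rho}(c) \geq \psi_{c,\rho}(c)$, then $\phi_{c,\rho} \geq \psi_{c,\rho}$ on the whole interval, so $\theta_{c,\rho} = \phi_{c,\rho}$ there, and combining \cref{lem:opt} with the shape of $\phi_{c,\rho}$ the minimizer is $\eta^* = c$ with value $\theta_{c,\rho}(c) = 1 - (1-\rho)c$. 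Substituting the explicit expression for $\alpha^{-1}$ from \cref{eq:alpha} into $\phi_{c,\rho}(c) > \psi_{c,\rho}(c)$ and rearranging yields exactly \cref{eq:opt:cond}.

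Otherwise $\phi_{c,\rho}(c) < \psi_{c,\rho}(c)$, and the intermediate value theorem combined with strict monotonicity gives a unique interior point $\eta^* \in (0,c)$ with $\phi_{c,\rho}(\eta^*) = \psi_{c,\rho}(\eta^*)$. Since $\phi_{c,\rho}$ is decreasing and $\psi_{c,\rho}$ is increasing on $[0,c]$, the maximum of the two is minimized precisely at this crossing, giving $\theta_{c,\rho}(\eta^*) = \phi_{c,\rho}(\eta^*) = \psi_{c,\rho}(\eta^*)$. For the closed form I would expand the equality $\phi_{c,\rho}(\eta) = \psi_{c,\rho}(\eta)$ into
\begin{equation*}
    (\alpha^{-1} - c^{-1})\eta^2 + 2\eta - (\rho c + \min_i p_i) = 0
\end{equation*}
and select the positive root, yielding \cref{eq:eta_opt}.

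The main obstacle I anticipate is verifying that the positive root from the quadratic actually lies in $(0,c]$, which is needed in order to legitimately appeal to \cref{lem:opt}; this should follow from the boundary inequalities combined with the strict monotonicity of $\phi_{c,\rho} - \psi_{c,\rho}$, forcing the single sign change to occur inside $(0,c]$. A minor technicality is the borderline case $\alpha^{-1} = c^{-1}$, where the quadratic collapses to the linear equation $2\eta = \rho c + \min_i p_i$; the stated formula \cref{eq:eta_opt} should then be interpreted as its removable limit, which recovers the correct linear solution.
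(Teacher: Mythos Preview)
Your proposal is correct and follows essentially the same route as the paper: both arguments use the opposing monotonicity of $\phi_{c,\rho}$ and $\psi_{c,\rho}$ on $[0,c]$ to reduce the minimax problem to a case analysis governed by the endpoint values, rule out the $\eta^*=0$ case via $\phi_{c,\rho}(0)>\psi_{c,\rho}(0)$, and solve the crossing quadratic for \cref{eq:eta_opt}. Your write-up is in fact a bit more explicit than the paper's (you spell out the endpoint computation and the IVT step, and you flag the removable $\alpha^{-1}=c^{-1}$ limit), but the underlying strategy is the same.
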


Figure \ref{fig:step-sizecomp} shows the optimal $\sigma$ and its convergence rate $\theta$ as a function of $c$ and $\rho$ as computed by Theorem \ref{thm:optsteps} for the numerical as described in the next section. The constants $L, \bar{L}, \bar{L}_p$ were estimated using the power method \cite{golub2013matrix}
The fastest contraction is given by $\sigma^* \approx 0.0075$ and $\theta^* \approx 0.9925$. Note that this can be compared to 
\begin{equation}
    \sigma_B = \frac{1}{L^2 + 3\bar{L}^2} \approx 0.0127
\end{equation}
as used in \cite{Balamurugan2016}.

\noindent\begin{minipage}{.5\textwidth}
\begin{tikzpicture}[scale=0.7]
\definecolor{darkgray176}{RGB}{176,176,176}
\begin{axis}[
colorbar,
colorbar style={ylabel={}},
colormap/viridis,
point meta max=0.0267855319171118,
point meta min=0.000801164083850264,
tick align=outside,
tick pos=left,
x grid style={darkgray176},
xlabel={\(\displaystyle c\)},
xmin=-0.5, xmax=99.5,
xtick style={color=black},
xtick={0,25,50,74,99},
xticklabels={0.01,0.04,0.06,0.09,0.12},
y dir=reverse,
y grid style={darkgray176},
ylabel={\(\displaystyle \rho\)},
ymin=-0.5, ymax=99.5,
ytick style={color=black},
ytick={0,25,50,74,99},
yticklabels={0.01,0.26,0.50,0.74,0.99}
]
\addplot graphics [includegraphics cmd=\pgfimage,xmin=-0.5, xmax=99.5, ymin=99.5, ymax=-0.5] {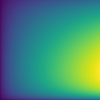};
\end{axis}
\end{tikzpicture}
\end{minipage}
\hspace{-.07cm}
\begin{minipage}{.5\textwidth}
\begin{tikzpicture}[scale=0.7]
\definecolor{darkgray176}{RGB}{176,176,176}
\begin{axis}[
colorbar,
colorbar style={ylabel={}, /pgf/number format/precision=3},
colormap/viridis,
point meta max=1,
point meta min=0.992501666408587,
tick align=outside,
tick pos=left,
x grid style={darkgray176},
xlabel={\(\displaystyle c\)},
xmin=-0.5, xmax=99.5,
xtick style={color=black},
xtick={0,25,50,74,99},
xticklabels={0.01,0.04,0.06,0.09,0.12},
y dir=reverse,
y grid style={darkgray176},
ylabel={\(\displaystyle \rho\)},
ymin=-0.5, ymax=99.5,
ytick style={color=black},
ytick={0,25,50,74,99},
yticklabels={0.01,0.26,0.50,0.74,0.99}
]
\addplot graphics [includegraphics cmd=\pgfimage,xmin=-0.5, xmax=99.5, ymin=99.5, ymax=-0.5] {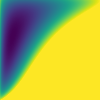};
\end{axis}
\end{tikzpicture}
\end{minipage}
\captionof{figure}{Optimal step-size $\sigma$ (left) and its corresponding convergence rate $\theta$ (right).}\label{fig:step-sizecomp}

\newpage 

\section{Numerical Results}\label{sec:results}

\subsection{Efficient Image Sketching} \label{sec:sketches} 
For simplicity, we outline the algorithm in 2D for square images but it can be readily generalized to other dimensions and geometries. Let $\*x \in \R^d$, with $d = 2^n, \, n \in \-Z_+$, be the vectorized version of an $\sqrt{d}\times\sqrt{d}$ image at \enquote{full} resolution which is mapped to measurements $\*b\in\R^m$ by a linear forward operator $\*K\in\R^{m\times d}$. For instance, in CT the operator $\*K$ models the X-ray line tracing geometry from source to detectors, crossing through the discretized image pixels at different angles and maps into projection measurements (sinogram). The measurements dimension $m$ is dictated by the geometry and can be calculated as the product of the size of the detector array and the number of angles. In a naive sequential implementation, this operator scales linearly in the image dimension $\sqrt{d}$. \Cref{fig:ASTRA_comp_time} shows a numerical demonstration, with an implementation of the operator $\*K$ for the X-ray CT problem, that in practice the relationship can be fit as a linear scaling. Thus, in order to reduce the computation, we aim to construct operators $\*K_i\in\R^{m\times d}$ which map an image in a low-resolution discretized grid of dimension $\sqrt{d_i}\times \sqrt{d_i}$ to measurements of the original dimension $m$ which are similar to $\*K$. As long as reducing the dimension is cheap, the computational cost of $\*K_i \*x$ compared to $\*K \*x$ is then about $\sqrt{d}/\sqrt{d_i}$ cheaper. 
We define the operator $\*M$ to convert a  $d$-dimensional vector $\*x \in \R^d$ into a $\sqrt{d}\times\sqrt{d}$ matrix $\*M\*x$. 

The linear mappings from full to low resolution grids in the image domain can be modeled with the decimator operators $\*T_i: \R^d\rightarrow\R^{d_i}, \, i = 1, \ldots, r-1$ from the full-resolution $r$, images $\sqrt{d}\times\sqrt{d}$, to low-resolution $\*M\left[\*T_i\*x\right]$ images by performing downsampling of the factor $2^i\times 2^i$. The simplest of such operators are $2^{i}\times 2^{i}$-block averaging defined as following: 
\begin{equation}	\*M\left[\*T_i\*x\right]\left(j,k\right) = \frac 1 {2^{2i}} \sum_{s=1}^{2^i}\sum_{t=1}^{2^i} \*M\*x\left(2^i(j-1)+s, 2^i(k-1)+t\right), \quad j,k \in \left\{ 1, \ldots , \sqrt{d}/2^i \right\}
\end{equation}
with adjoint
\begin{align*}	\*M\left[\*T^*_i\*x\right]\left(j, k\right) &= \*M\*x\left(\left\lceil j /2^i \right\rceil, \left\lceil k/2^i \right\rceil\right), \quad j,k \in \{1, \ldots, \sqrt{d}\}.
\end{align*}
It is important to note that $\*S_i = \*T_i^T \*T_i$ is proportional to the projection onto the subspace of piece-wise constant images, see also \cref{fig:CTsketch_ASTRA_Idecim}.

Given a full resolution image $\*x\in\R^d$ and the bounded linear operator $\*K$, it is possible to define a family of low resolution linear sketches $\*K_i: \R^d\rightarrow\R^{m}, i = 1, \ldots, r-1$ as following   
\begin{equation}\label{eq:CT_sketches}
	\*K_i\*x = \left\{
	\begin{array}{lcl}
		\*K\*S_i \*x = \*K \*T^T_i \*T_i \*x = \*R_i\*T_i \*x & , & i = 1,\ldots, r-1 \\
		\*K \*S_r \*x & , & i = r    
	\end{array}\right.
\end{equation}
where $p_i$ are the probability of selecting low resolution sketches while $p_r = 1 - \sum_{i=1}^{r-1}p_i$ is the probability of selecting the full resolution operator. 
It is important to highlight that in practice in the low resolution case, $i=1, \ldots, r-1$, the operation $\*K \*T^*_i$ can be approximated with $\*R_i$ which represents the application of the forward model on the $i$th low resolution grid. This means that it is never required to compute matrix-vector multiplications at full resolution. Moreover, the up-sampling $\*T^*_i$ is never explicitly computed. Recalling the CT case, $\*R_i\in\R^{d_i\times m}$ represents the X-ray tracing model from the low dimensional $\sqrt{d_i}\times \sqrt{d_i}$ input grid to the $m$ measurements. 
In \cref{eq:CT_sketches} for the full resolution case, $i=r$, the linear mapping  $\*T^*_r \*T_r$ is derived as a result of the  constraint \cref{eq:si} as
\begin{equation}		
	\*S_r = p_r^{-1} \left(\*I - \sum_{i=1}^{r-1} p_i \*T_i^T\*T_i\right).
\end{equation}

We can visualize and analyze the effect of the described the sketched low-resolution operators with the X-ray CT example. We consider the CT forward operator $\*K$ and the construction of the low resolution sketches $\*K_i$ implemented with the ASTRA Toolbox \cite{vanAarle:16} using a parallel beam geometry with line discretization. We simulate the CT acquisition using $n_{\theta}=100$ angular projections and $n_d=\lceil\sqrt{2d}\rceil$ detectors, where the dimension of the measurement vector $\*b$ is $m = n_{\theta}\cdot n_d$. We further added Gaussian noise to the projection data with a uniform initial count of $10^5$ X-ray photons. The input data consists of a 1-mm pixel-width $512\times 512$ torso axial slice images generated from the XCAT phantom \cite{segars20104d}. 
\cref{fig:CTsketch_ASTRA_Idecim} shows the effect obtained by concatenating the decimation operator $\*T_i$ and interpolator $\*T_i^*$ for $r=4$ to the XCAT ground truth image $\*x^*$ of dimension $\sqrt{d} \times\sqrt{d}$ with $\sqrt{d} = 512$. 

\begin{figure*}[!ht]
	\centering
	\small\addtolength{\tabcolsep}{-8pt}
	\renewcommand{\arraystretch}{0.1}
	
	\begin{tabular}{p{2.8cm} p{2.8cm} p{2.8cm} p{2.8cm} p{2.8cm} p{2.8cm}}
		
		\multicolumn{1}{c}{$\*K\*x^*$}
		&
		\multicolumn{3}{c}{$\*K_i\*x^* = \*R_i\*T_i\*x^*$} 
		&
		\multicolumn{1}{c}{\hspace*{-2mm}$\*K\*S_r\*x^*$}
		& 
		\multicolumn{1}{c}{\hspace*{-5mm}$\*K\*x^*-\sum_{i=1}^{r}p_i\*K_i\*x^*$} \vspace{.1cm} \\		
		\multicolumn{1}{c}{} & \multicolumn{3}{c}{\downbracefill} & \multicolumn{1}{c}{} \vspace{.1cm} & \multicolumn{1}{c}{} \vspace{.1cm} 
		\\
		\includegraphics[clip, trim=10pt 10pt 10pt 10pt, width=2.5cm, height=6cm]{./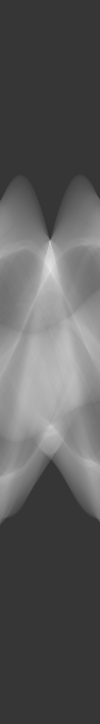}
		&
		\includegraphics[clip, trim=10pt 10pt 10pt 10pt, width=2.5cm, height=6cm]{./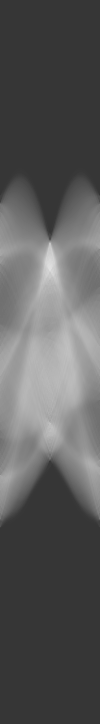}
		&       
		\includegraphics[clip, trim=10pt 10pt 10pt 10pt, width=2.5cm, height=6cm]{./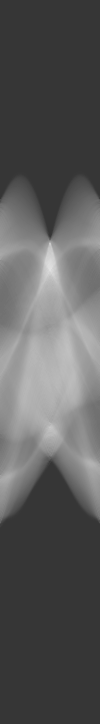} 
		&                          
		\includegraphics[clip, trim=10pt 10pt 10pt 10pt, width=2.5cm, height=6cm]{./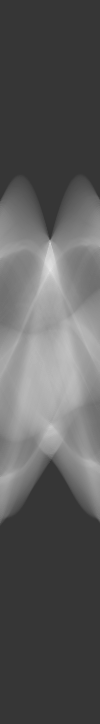} 
		&
		\includegraphics[clip, trim=10pt 10pt 10pt 10pt, width=2.5cm, height=6cm]{./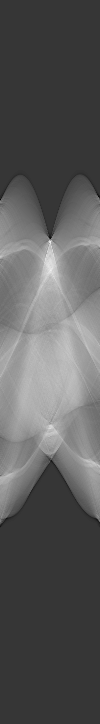} 
		&                             
		\includegraphics[clip, trim=10pt 10pt 10pt 10pt, width=2.5cm, height=6cm]{./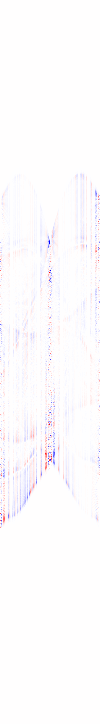}
		\\ [2pt]
		\multicolumn{1}{c}{(a) Full sinogram} 
		& 
		\multicolumn{1}{c}{(b) $\sqrt{d_1} = 64$} 
		& 
		\multicolumn{1}{c}{(c) $\sqrt{d_2} = 128$} 
		& 
		\multicolumn{1}{c}{(d) $\sqrt{d_3} = 256$}  
		& 
		\multicolumn{1}{c}{(e) $\sqrt{d_4} = 512$}  
		& 
		\multicolumn{1}{c}{(f) \small{error}} 
	\end{tabular}
	
	\caption{Visualization of the sinogram decomposition $\*K_i\*x^*$ with $\*x^*$ the ground truth XCAT image in \cref{fig:CTsketch_ASTRA_Idecim}(a), $\*K$ the parallel X-ray CT operator with $100$ projections and $\sqrt{2d} \approx 724$ detectors implemented with ASTRA Toolbox at image resolutions $\sqrt{d}_i= [64, 128, 256, 512]$, with $i = 1,\ldots,r-1, \, r=4$ and probabilities $p_i=[0.3, 0.3, 0.2], \, p_r = 0.2$:  (a) full sinogram $\*K\*x^*$ from the high resolution image, $\sqrt{d} = 512$, (b-d) sinogram decomposition through low resolution image mappings $\*K_i\*x^* = \*R_i\*T_ix^*$ (intensity range $[-0.3, 1.1]$) and (f) error between the full sinogram and the sketched decomposition $\*K\*x^*-\sum_{i=1}^{r}p_i\*K_i\*x^*$ (intensity range $[-2, 2]\times 10^{-2}$).}
	\label{fig:CTsketch_ASTRA_sino_decomp}
\end{figure*}

\cref{fig:CTsketch_ASTRA_sino_decomp} shows decomposition of the sinogram using different CT sketches the low resolution sketches $\*K_i\*x^* = \*R_i\*T_i\*x^*$; it is important to note that in the ASTRA implementation the interpolator $\*T_r^*$ is embedded in the projector function once the geometrical input (resolution) and output parameters are defined. We verify the correctness of the split as in \cref{eq:CT_sketches} in \cref{fig:CTsketch_ASTRA_sino_decomp}(f) where the error between the original sinogram $\*K\*x^*$ and the one obtained as sum of low resolution induced sinograms $\sum_{i=1}^{r}\*K_i\*x^*$ is visualized and the range of the numerical error is $[-2,\, 2]\cdot 10^{-2}$.

In order to obtain a quantitative measure of the computational time needed for applying the ASTRA CT forward operator at different input resolutions, we performed a Monte--Carlo simulations to find the average time and standard deviation (STD) over 100 runs of the forward operators $\*K_i$ for resolutions $\sqrt{d_i}=2^i,\; i=1,\ldots, 10$. \cref{fig:ASTRA_comp_time}(a) shows that the linear scaling between computational time and input resolutions, represented by the green dash line, is a very tight bound with the actual mean time and the STD appears to be higher as the full resolution increases. This analysis provided a measure to evaluate the cost of matrix multiplications at different resolutions to account in the numerical simulations of \imask; we considered the average bound of or $\textrm{ratio}=2$ in terms of reduction in complexity for a factor of 2 reduction in resolution ($2^i\rightarrow 2^{i-1}$). 

\begin{figure}[!ht]
	\centering
		\scalebox{0.9}{\includegraphics{./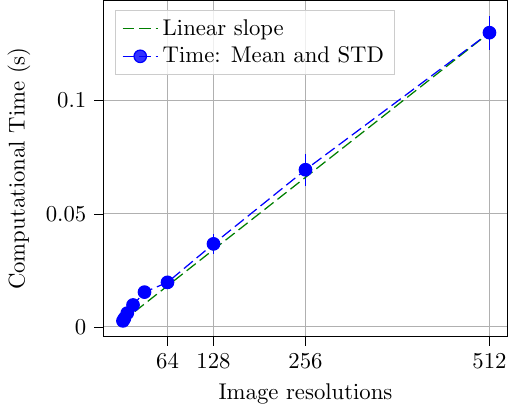}}
	\caption{Analysis of the computational time of the CT forward operator $\*R_i\*x$ in ASTRA for different input resolutions $\sqrt{d_i}=2^i,\; i=1,\ldots, 10$ and linear interpolation; mean and standard deviation obtained using 100 Monte--Carlo simulations.} \label{fig:ASTRA_comp_time}
\end{figure}

\subsection{$L_2$-Regularized Linear Regression}
		
We consider the CT forward operator $\*K$ and the construction of the low resolution sketches $\*K_i$ implemented with the ASTRA Toolbox \cite{vanAarle:16} using a parallel beam geometry with line discretization. We simulate the CT acquisition using $n_{\theta}=100$ angular projections and $n_d=\lceil\sqrt{2d}\rceil$ detectors, where the dimension of the measurement vector $\*b$ is $m = n_{\theta}\cdot n_d$. We further added Poisson noise to the projection data with a uniform initial count of $10^5$ X-ray photons. We used two sets of input data consisting of a 1 mm pixel-width $512\times 512\, (d=512^2)$ torso axial slice images generated from the XCAT phantom \cite{segars20104d}, and real clinical CT images from the AAPM dataset (Low dose challenge) \cite{mccollough2016tu}.
		
We utilize the \imask~algorithm described in \cref{table:SAGA_mres} for solving the ridge regression problem, i.e., $L_2$-regularized cost function with $f(\*y) = \frac{1}{2}\|\*b - \*y\|_2^2$ and $g(\*x) = \frac{\mu_g}{2}\|\*x\|^2$, and we aim at finding the solution of the following minimization problem
\begin{equation*}
	\min_{\*x\in\R^d} \left\{\frac{1}{2}\|\*b - \*A\*x\|^2 + \frac{\mu_g}{2}\|\*x\|^2\right\}.
\end{equation*}
The Fenchel conjugate of the quadratic function $f(\*y) = \frac{1}{2}\|\*b - \*y\|^2$, which is 1-strongly convex function, is $f^*(\*y) = \langle\*b, \*y\rangle + \frac{1}{2}\| \*y \|^2$. 
Therefore, by using the definition of proximal operator with respect to the Euclidean norm $\|\cdot\|^2_2$, we have $\mathrm{prox}_{\sigma f^*}(\*p) = (1 +\sigma)^{-1} (\*p - \sigma\*b)$. Also the proximal operator associated to $g(\*x)$ is the $\|\cdot\|_2^2$ norm but scaled with the strong convexity factor $\mu_g$. In this case it is necessary rescale the proximal operator through the derivation described in \cref{A:scaling}. 

\subsection{\imask: Uniform Probability Distribution}
We consider the $L_2$ minimization problem with smoothness parameter $\mu_g=1$; we have applied the proposed \imask~algorithm with different resolution levels, $r \in \{2, 4, 8\}$ using a uniform probability distribution among the resolutions $p_i = 1/r_i, i = 1,\ldots, r$. We consider the solution obtained from Primal-Dual Hybrid gradient (PDHG) \cite{Chambolle2011pdhg}, with number of iterations $\It= 5000$, as reference for the convergence. 
		
\cref{tab:params_sim_L2} reports the parameters calculated for the spectral norm $L$ and $\bar{L}_p$ for the uniform probability and different resolutions $r$; the step size and the numerical rate are obtained using the procedure in \cref{sec:opt:step}. 
		
\begin{table}[!ht]
	\centering
	\begin{tabular}{c|cccc}
		$r$ & $L$ & $\bar{L}_p$ & $\sigma^*$ & $\theta$ \\ \hline
		1 & 2.46 & 2.46 & $1.6\cdot 10^{-3}$ & 0.914 \\
		2 & 2.46 & 1.74 & $2.2\cdot 10^{-3}$ & 0.925 \\
		4 & 2.46 & 1.23 & $2.6\cdot 10^{-3}$ & 0.987 \\
		8 & 2.46 & 0.87 & $2.9\cdot 10^{-3}$ & 0.991 \\ 
	\end{tabular}
	\caption{Parameters obtained for the \imask~with uniform probability distribution.}\label{tab:params_sim_L2}
\end{table}
		
\Cref{fig:L2_unif_dist_xp}(a)-(b) shows the distance to the optimizer, $\|\*x^t - \*x^{\natural}\|^2/\|\*x^{\natural}\|^2$ for the XCAT and AAPM images. From the results using different number of resolutions, we can derive the following observations: \imask~has linear convergence rate as derived in the theoretical bounds and for both sets of images the rate respect to the computational time (full matrix multiplications) is faster using multiple resolutions compared to the full resolution SAGA algorithm (blue curve, \imask~$r=1$). It is also possible to observe a similar pattern that as the number of resolutions used in \imask~increase, the convergence is faster for both input images.  
		
\begin{figure}[!ht]
	\centering
	\subfloat[Distance to optimal solution $\*x^{\natural}$, XCAT image.]{
		\scalebox{0.745}{\includegraphics{./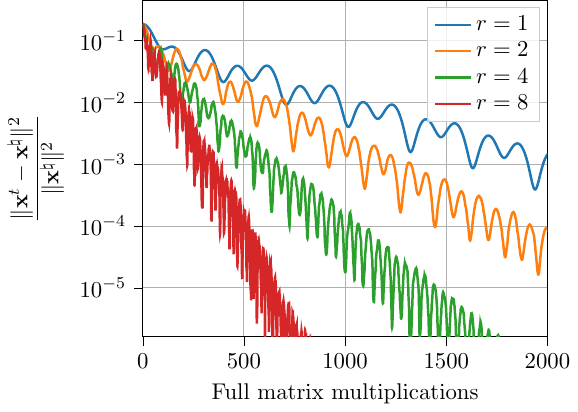}}} 
	\quad
	\subfloat[Distance to optimal solution $\*x^{\natural}$, AAPM image.]{
		\scalebox{0.745}{\includegraphics{./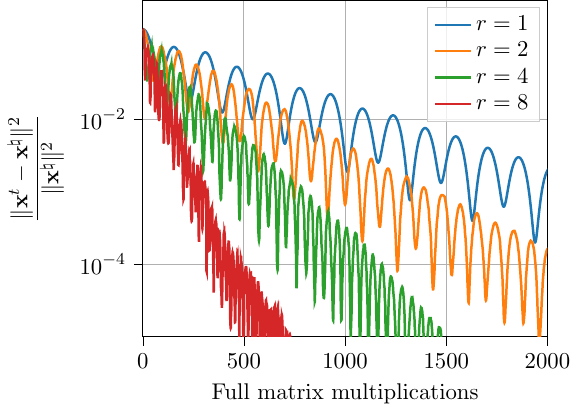}}} 
	\quad
	\subfloat[Distance to ground truth $\*x^*$, XCAT image.]{
		\scalebox{0.79}{\includegraphics{./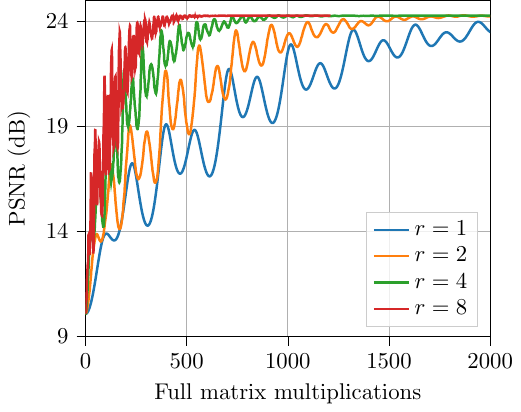}}}
	\quad
	\subfloat[Distance to ground truth $\*x^*$, AAPM image.]{
		\scalebox{0.8}{\includegraphics{./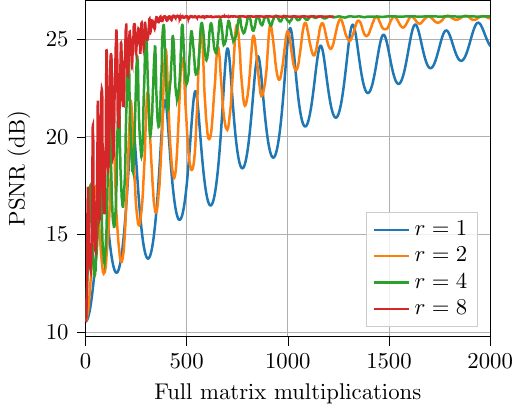}}}
	\caption{Analysis of the accuracy of the solution and computational time (Full matrix multiplications) of \imask~with respect to the PDHG result and ground truth: (a)-(b) the plots show the distance of the optimizer for the XCAT phantom and AAPM thorax image respectively of \imask~with different resolutions $r$. \imask~selects the resolutions with uniform probability, (c)-(d) PSNR for the XCAT phantom and AAPM thorax image, respectively.} \label{fig:L2_unif_dist_xp}
\end{figure}

The analysis of the \imask~solution accuracy respect to the ground truth $\*x^*$ versus the computational time is reported in \cref{fig:L2_unif_dist_xp}(c)-(d) which highlights the PSNR evolution for the XCAT and an AAPM thorax image respectively over the equivalent number of full matrix multiplications (epochs). For each resolution $i$ selected by \imask, the computational fraction is calculated as $1/(2^{\cdot(r-i)})$. All plots confirm that the curves for different number of resolutions $r$ are converging to the same point. 

It is worth noting that given a computational budget, i.e., the number of matrix multiplications, by using different low resolutions operators the PSNR is reduced compared to the full resolution case (blue line) and it supports the previous finding that by increasing the number of resolution from $r=2$ to $r=8$ the convergence of \imask~is faster. 
Overall, \cref{fig:L2_unif_dist_xp}(c)-(d) shows that in terms of running times, using more resolutions, i.e., by selecting less frequently the full resolution operator, is possible to reduce the overall time. 
		
The qualitative comparison of the CT reconstruction results for $L_2$ regularization problem between \imask, PDHG \cite{Chambolle2011pdhg} and deterministic SAGA (\imask~$r=1$), i.e., using only the full operator $\*K$, is shown in \cref{fig:L2_unif_recon_XCAT} for the XCAT phantom and a thorax real image from the AAPM dataset respectively. The \imask~results are listed with different resolution levels $r \in \{2, 4, 8\}$. In \cref{fig:L2_unif_recon_XCAT} the top row shows the intermediate reconstruction after 500 full matrix multiplications and the bottom row the final reconstruction. The intermediate images correspond to the estimates at a given computational budget and confirm that at early iterations SAGA (\imask~$r=1$) achieves low quality, blurred reconstruction, and as the number of resolution used in \imask~increases, the qualitative accuracy is higher. \cref{fig:L2_unif_recon_XCAT} visually supports the conclusions drawn based on the quantitative plots in \cref{fig:L2_unif_dist_xp}.
		
\begin{figure*}[!ht]
	\centering
	\small\addtolength{\tabcolsep}{-9pt}
	\renewcommand{\arraystretch}{0.1}
			
			\begin{tabular}{ccccc}
				\small{(a) Reference solution}
				&
				\specialcell[c]{\small (b) ImaSk \\ $r = 1$}
				&
				\specialcell[c]{\small (c) ImaSk \\ $r = 2$}
				&
				\specialcell[c]{\small (d) ImaSk \\ $r = 4$}
				&
				\specialcell[c]{\small (e) ImaSk \\ $r = 8$}   \\
				
				\begin{tikzpicture}
					\node[text centered,text width = 0.15\textwidth, text depth = 3cm, anchor=north] {\small Intermediate\\reconstruction\\after 500 equivalent\\full matrix\\multiplications.};
				\end{tikzpicture}
				&
				\begin{tikzpicture}
					\begin{scope}[spy using outlines={rectangle,yellow,magnification=3,size=12mm,connect spies}]
						\node {\includegraphics[viewport=45 50 325 290, clip, width=0.2\textwidth]{./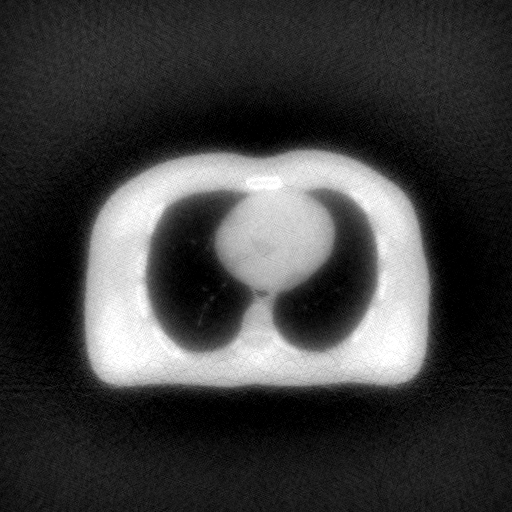}}; 
						\spy on (0,-.5) in node [left] at (-0.3,-1.8);
					\end{scope}
				\end{tikzpicture}
				&
				\begin{tikzpicture}
					\begin{scope}[spy using outlines={rectangle,yellow,magnification=3,size=12mm,connect spies}]
						\node {\includegraphics[viewport=45 50 325 290, clip, width=0.2\textwidth]{./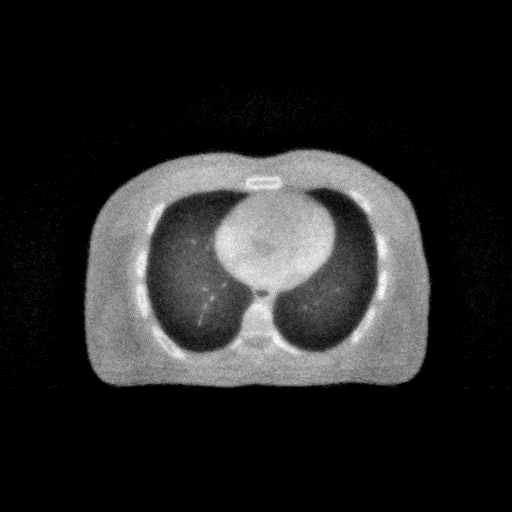}}; 
						\spy on (0,-.5) in node [left] at (-0.3,-1.8);
					\end{scope}
				\end{tikzpicture}
				&
				\begin{tikzpicture}
					\begin{scope}[spy using outlines={rectangle,yellow,magnification=3,size=12mm,connect spies}]
						\node {\includegraphics[viewport=45 50 325 290, clip, width=0.2\textwidth]{./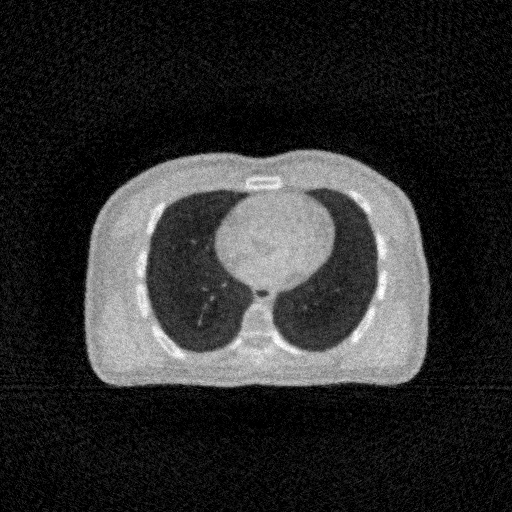}}; 
						\spy on (0,-.5) in node [left] at (-0.3,-1.8);
					\end{scope}
				\end{tikzpicture}
				&
				\begin{tikzpicture}
					\begin{scope}[spy using outlines={rectangle,yellow,magnification=3,size=12mm,connect spies}]
						\node {\includegraphics[viewport=45 50 325 290, clip, width=0.2\textwidth]{./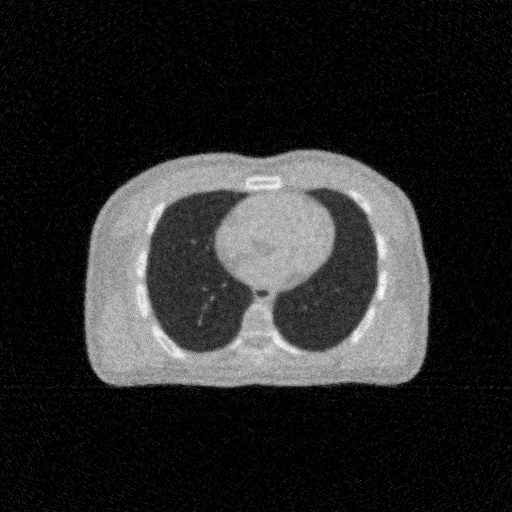}}; 
						\spy on (0,-.5) in node [left] at (-0.3,-1.8);
					\end{scope}
				\end{tikzpicture}
				\\
				\begin{tikzpicture}
					\begin{scope}[spy using outlines={rectangle,yellow,magnification=1.25,size=12mm,connect spies}]
						\node {\includegraphics[viewport=45 50 325 290, clip, width=0.2\textwidth]{./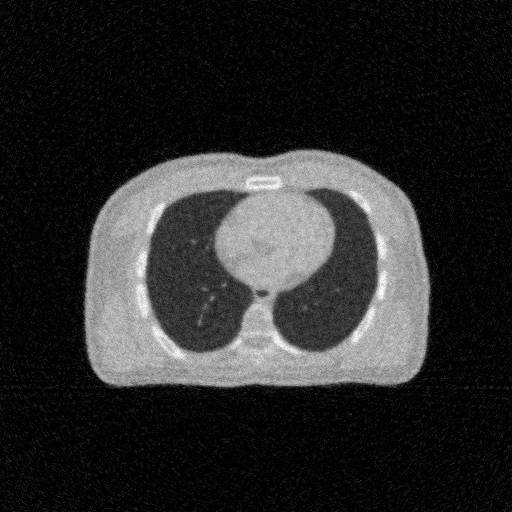}};
						\spy on (0,-.5) in node [left] at (-0.3,-1.8);
					\end{scope}
				\end{tikzpicture}
				&
				\begin{tikzpicture}
					\begin{scope}[spy using outlines={rectangle,yellow,magnification=1.25,size=12mm,connect spies}]
						\node {\includegraphics[viewport=45 50 325 290, clip, width=0.2\textwidth]{./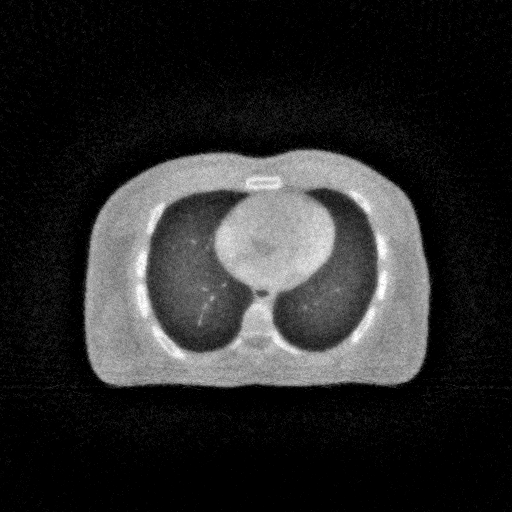}};
						\spy on (0,-.5) in node [left] at (-0.3,-1.8);
					\end{scope}
				\end{tikzpicture}
				&
				\begin{tikzpicture}
					\begin{scope}[spy using outlines={rectangle,yellow,magnification=1.25,size=12mm,connect spies}]
						\node {\includegraphics[viewport=45 50 325 290, clip, width=0.2\textwidth]{./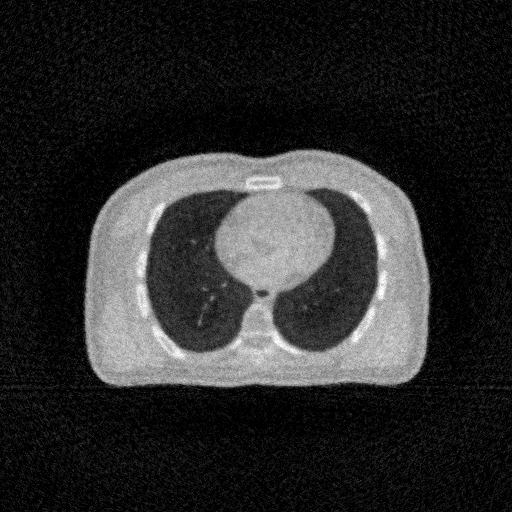}};
						\spy on (0,-.5) in node [left] at (-0.3,-1.8);
					\end{scope}
				\end{tikzpicture}
				&
				\begin{tikzpicture}
					\begin{scope}[spy using outlines={rectangle,yellow,magnification=1.25,size=12mm,connect spies}]
						\node {\includegraphics[viewport=45 50 325 290, clip, width=0.2\textwidth]{./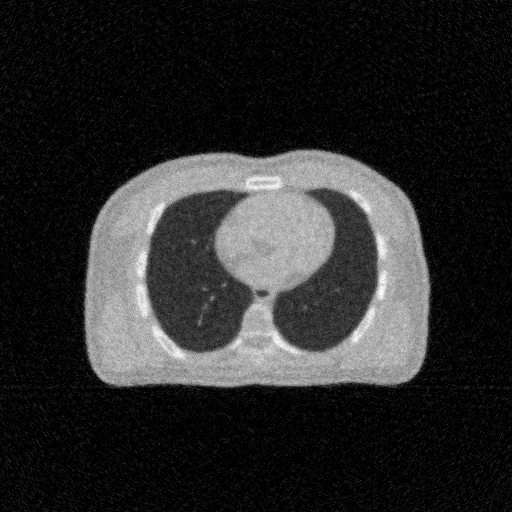}};
						\spy on (0,-.5) in node [left] at (-0.3,-1.8);
					\end{scope}
				\end{tikzpicture}
				&
				\begin{tikzpicture}
					\begin{scope}[spy using outlines={rectangle,yellow,magnification=1.25,size=12mm,connect spies}]
						\node {\includegraphics[viewport=45 50 325 290, clip, width=0.2\textwidth]{./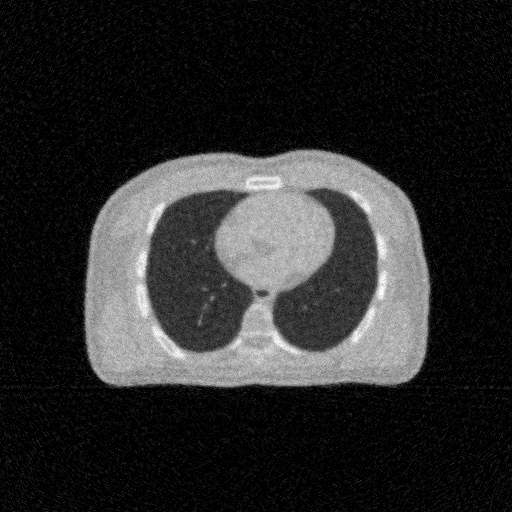}};
						\spy on (0,-.5) in node [left] at (-0.3,-1.8);
					\end{scope}
				\end{tikzpicture}
				
			\end{tabular}
			
	\caption{Comparison of the CT reconstruction results for the primal-dual $L_2$ regularization problem using (a) Reference solution computed using PDHG \cite{Chambolle2011pdhg}, (b) \imask~$r=1$, i.e., using only the full resolution operator and (c-e) \imask~with uniform probability and different number of resolutions $r$. Top and bottom rows show the intermediate reconstruction after 500 full matrix multiplications (epochs) respectively for the XCAT phantom and AAPM thorax image and the middle row the final reconstruction of the AAPM image.}\label{fig:L2_unif_recon_XCAT}
\end{figure*}

\subsection{Analysis of \imask~step-size}
	
In this numerical simulation we analyse the convergence of \imask~for different values of the step-size. 
	
\begin{figure}[!ht]
	\centering
	\subfloat[Distance to optimality.]{
		\scalebox{0.75}{\includegraphics{./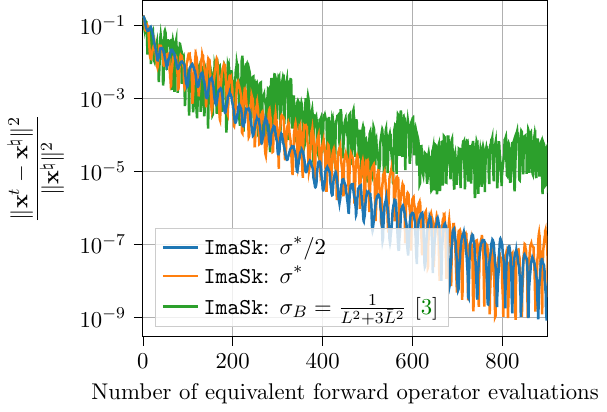}}} 
	\quad
	\subfloat[Distance to ground truth $\*x^*$.]{
		\scalebox{0.75}{\includegraphics{./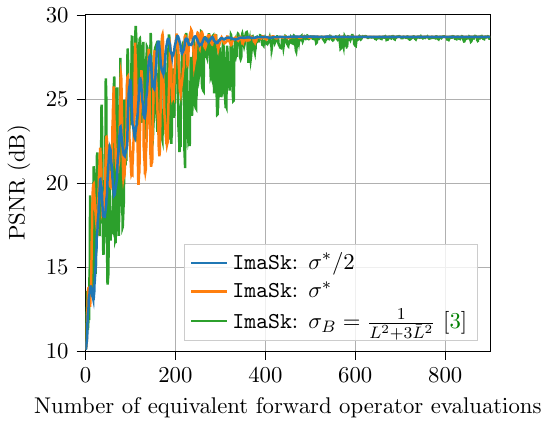}}} 
	\caption{Quantitative ablation study for different \imask~step-sizes for TV regularization for AAPM image. (a) Distance of the \imask~estimate $\*x^t$ respect to the PDHG solution $\*x^{\natural}$, (b) PSNR error respect to $\*x^*$ in dB.}
	\label{fig:analysis_ss}
\end{figure}

\Cref{fig:analysis_ss} shows the PSNR in dB for different \imask~step-sizes $\sigma = [\sigma^*/2, \sigma^*, \sigma_B]$, where the definition of $\sigma^*$ and $\sigma_B$ is described in \cref{sec:opt:step} and the value of $\sigma^*$ 
is reported in \cref{tab:params_sim_L2} for the case of $L_2$ regularizer and $r=4$. From the numerical simulation, it emerges that for step-sizes $\sigma > \sigma^*$ the convergence rate is slower; for $\sigma^*/2$ the variance is less compared to $\sigma^*$ although the rate of convergence is similar.

\subsection{Non-strongly Convex Regularization}
	
In this section we present the results obtained by using the \imask~algorithm to solve the CT problem using the Total variation (TV) regularization. The  prior is the TV of $\*x$ with non-negativity constraint $g(\*x) =\mu_g \|\nabla\*x\|_{1,2} + \chi_{\geq 0}(\*x)$, with regularization parameter $\mu_g=1$ and the gradient operator $\nabla\*x = (\nabla_1\*x,\nabla_2\*x)\in \R^{d^2\times 2}$ is discretized by the differences in horizontal and vertical directions. The $1$-$2$-norm is defined as $\|\*w\|_{1,2}:= \sum_j \sqrt{w_{j,1}^2 + w_{j,2}^2}$. The TV proximal operator is implemented using FISTA \cite{beck2009fast}. The sketch selection is performed using a uniform probability distribution.

\begin{figure}[!ht]
	\centering
	\subfloat[Distance to optimality.]{
		\scalebox{0.8}{\includegraphics{./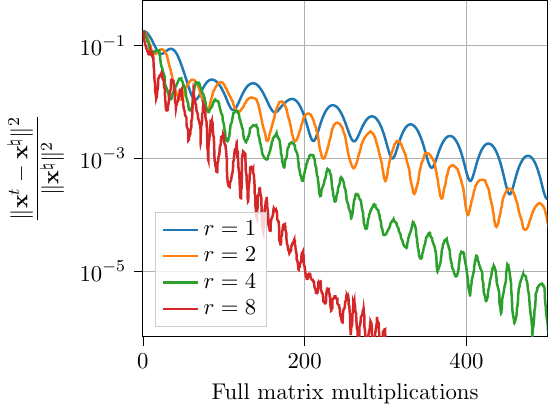}}}
	\quad
	\subfloat[Distance to ground truth $\*x^*$.]{
		\scalebox{0.8}{\includegraphics{./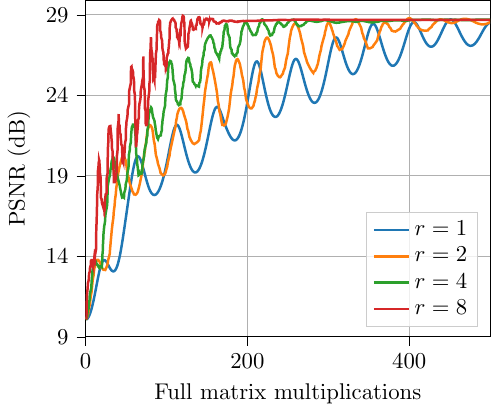}}}
	\caption{Quantitative ablation study for TV regularization for AAPM image. (a) Distance of the \imask~estimate $\*x^t$ respect to the PDHG solution $\*x^{\natural}$, (b) PSNR error respect to $\*x^*$ in dB.}
	\label{fig:TV_unif_dist_x}
\end{figure}

\cref{fig:TV_unif_dist_x}(a)-(b) shows respectively the distance to the optimizer, the PSNR of \imask~with TV regularization and with uniform probability. Although the convergence bound derived in \cref{thm:convImaSk} assumes strong convexity of the regularizer, the results in \cref{fig:TV_unif_dist_x}(a) for different number of resolutions show that \imask~is converging with non-strongly convex TV regularizer with a linear rate as for the strong convex case. In terms of PSNR performance, \imask~with $r \in \{2, 4, 8\}$ number of resolutions appears to converge noticeably faster respect to deterministic full resolution SAGA (blue curve, \imask~$r=1$). Furthermore, \cref{fig:TV_unif_dist_x}(a) confirms, as in the strongly convex case, that as the number of resolution increases the convergence is faster.
	
The qualitative comparison of the CT reconstruction results between PDHG \cite{Chambolle2011pdhg}, \imask~and deterministic SAGA with TV regularization is shown in \cref{fig:TV_unif_recon}. If we compare the intermediate images corresponding to the estimates at a given computational budget (100 iterations) with the results in \cref{fig:L2_unif_recon_XCAT} for the strongly convex case, we can observe a similar behavior where at early iterations SAGA achieves blurred reconstruction and the number of resolution used in \imask~increases the qualitative accuracy is higher.
	
\begin{figure*}[!ht]
	\centering
	\small\addtolength{\tabcolsep}{-9pt}
	\renewcommand{\arraystretch}{0.1}
		
		\begin{tabular}{ccccc}
			\small{(a) Reference solution}
			&
			\specialcell[c]{\small (b) ImaSk \\ $r = 1$}
			&
			\specialcell[c]{\small (c) ImaSk \\ $r = 2$}
			&
			\specialcell[c]{\small (d) ImaSk \\ $r = 4$}
			&
			\specialcell[c]{\small (e) ImaSk \\ $r = 8$}   \\
			
			\begin{tikzpicture}
				\node[text centered,text width = 0.15\textwidth, text depth = 3cm, anchor=north] {\small Intermediate\\reconstruction\\after 100 equivalent\\full matrix\\multiplications.};
			\end{tikzpicture}
			&
			\begin{tikzpicture}
				\begin{scope}[spy using outlines={rectangle,yellow,magnification=1.25,size=12mm,connect spies}]
					\node {\includegraphics[viewport=20 70 365 340, clip,  
						width=0.2\textwidth]{./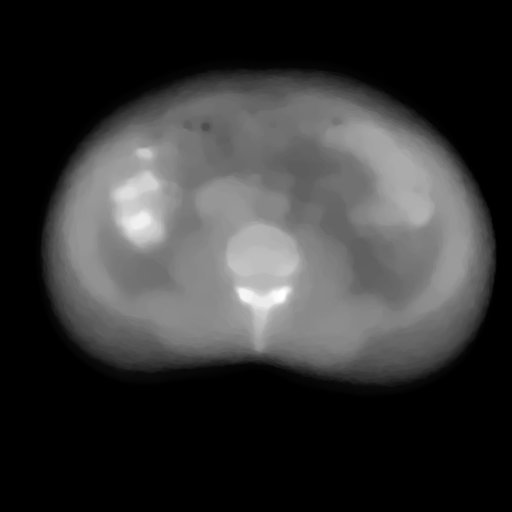}}; 
					\spy on (0,-.5) in node [left] at (-0.3,-1.8);
				\end{scope}
			\end{tikzpicture}
			&
			\begin{tikzpicture}
				\begin{scope}[spy using outlines={rectangle,yellow,magnification=1.25,size=12mm,connect spies}]
					\node {\includegraphics[viewport=20 70 365 340, clip, width=0.2\textwidth]{./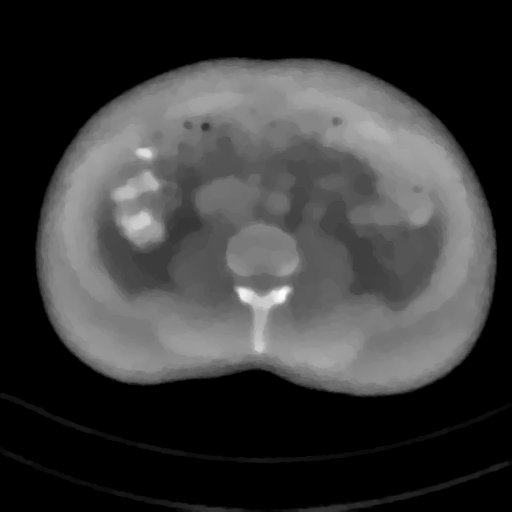}}; 
					\spy on (0,-.5) in node [left] at (-0.3,-1.8);
				\end{scope}
			\end{tikzpicture}
			&
			\begin{tikzpicture}
				\begin{scope}[spy using outlines={rectangle,yellow,magnification=1.25,size=12mm,connect spies}]
					\node {\includegraphics[viewport=20 70 365 340, clip, width=0.2\textwidth]{./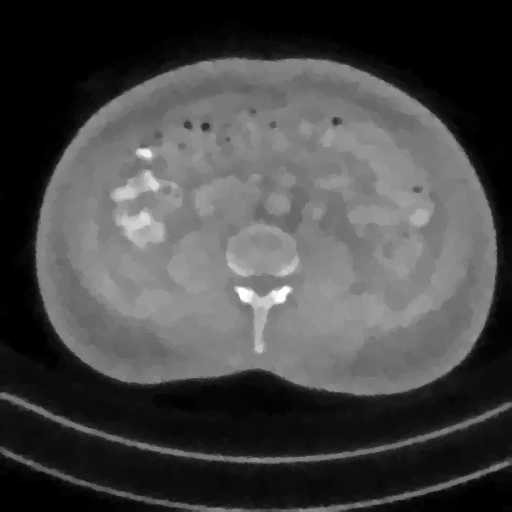}}; 
					\spy on (0,-.5) in node [left] at (-0.3,-1.8);
				\end{scope}
			\end{tikzpicture}
			&
			\begin{tikzpicture}
				\begin{scope}[spy using outlines={rectangle,yellow,magnification=1.25,size=12mm,connect spies}]
					\node {\includegraphics[viewport=20 70 365 340, clip, width=0.2\textwidth]{./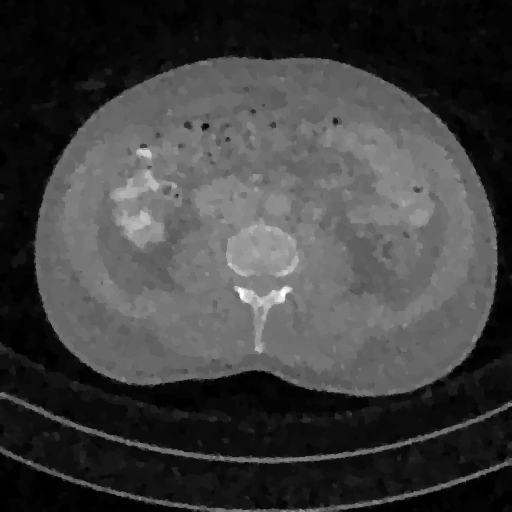}}; 
					\spy on (0,-.5) in node [left] at (-0.3,-1.8);
				\end{scope}
			\end{tikzpicture}
			\\ 
			
			\begin{tikzpicture}
				\begin{scope}[spy using outlines={rectangle,yellow,magnification=1.25,size=12mm,connect spies}]
					\node {\includegraphics[viewport=20 70 365 340, clip, width=0.2\textwidth]{./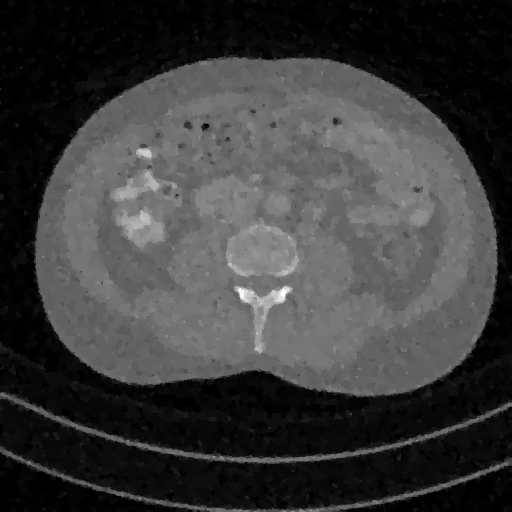}};
					\spy on (0,-.5) in node [left] at (-0.3,-1.8);
				\end{scope}
			\end{tikzpicture}
			&
			\begin{tikzpicture}
				\begin{scope}[spy using outlines={rectangle,yellow,magnification=1.25,size=12mm,connect spies}]
					\node {\includegraphics[viewport=20 70 365 340, clip, width=0.2\textwidth]{./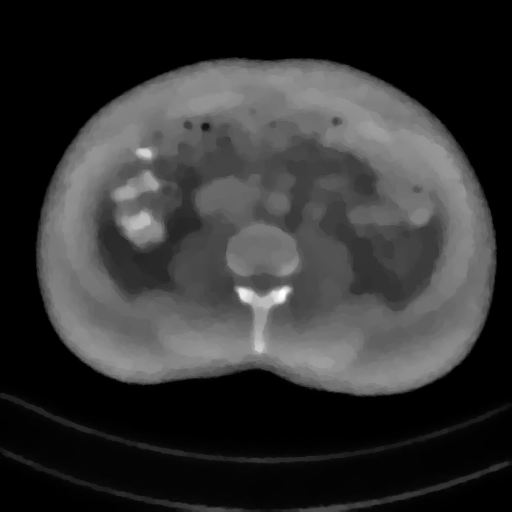}};
					\spy on (0,-.5) in node [left] at (-0.3,-1.8);
				\end{scope}
			\end{tikzpicture}
			&
			\begin{tikzpicture}
				\begin{scope}[spy using outlines={rectangle,yellow,magnification=1.25,size=12mm,connect spies}]
					\node {\includegraphics[viewport=20 70 365 340, clip, width=0.2\textwidth]{./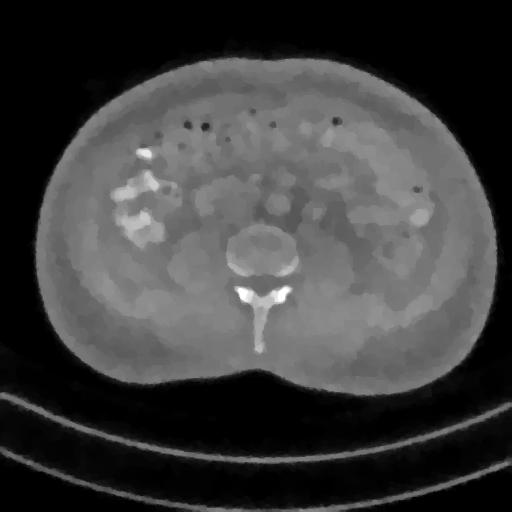}};
					\spy on (0,-.5) in node [left] at (-0.3,-1.8);
				\end{scope}
			\end{tikzpicture}
			&
			\begin{tikzpicture}
				\begin{scope}[spy using outlines={rectangle,yellow,magnification=1.25,size=12mm,connect spies}]
					\node {\includegraphics[viewport=20 70 365 340, clip, width=0.2\textwidth]{./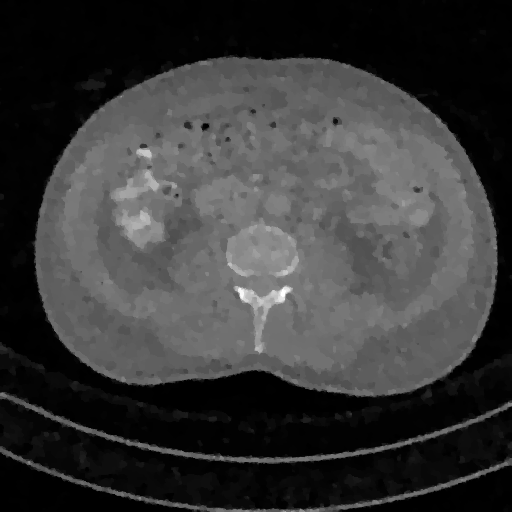}};
					\spy on (0,-.5) in node [left] at (-0.3,-1.8);
				\end{scope}
			\end{tikzpicture}
			&
			\begin{tikzpicture}
				\begin{scope}[spy using outlines={rectangle,yellow,magnification=1.25,size=12mm,connect spies}]
					\node {\includegraphics[viewport=20 70 365 340, clip, width=0.2\textwidth]{./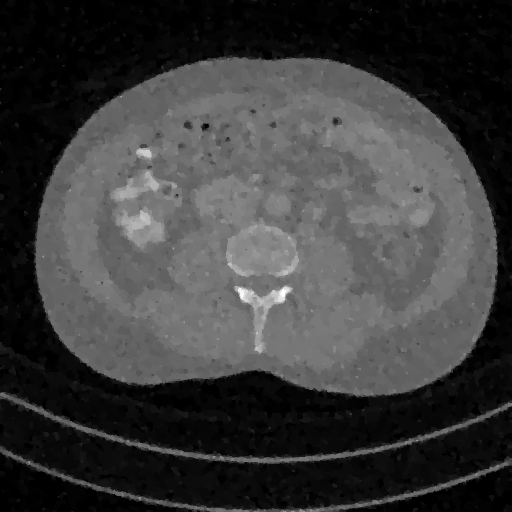}};
					\spy on (0,-.5) in node [left] at (-0.3,-1.8);
				\end{scope}
			\end{tikzpicture}	
		\end{tabular}
\caption{Comparison of the CT reconstruction results for the $TV$ regularization problem using (a) Reference solution computed using PDHG \cite{Chambolle2011pdhg}, (b) \imask~$r=1$, i.e., using only the full resolution operator and (c-e) \imask~with uniform probability and different number of resolutions $r$. Top row shows the intermediate reconstruction after 100 full matrix multiplications (epochs) and the bottom row the final reconstruction (500 iterations).} \label{fig:TV_unif_recon}
\end{figure*}

\section{Variant \imaskseq: Sequential Updates and Extrapolation}
	
The \imask~algorithm that we have proposed in \cref{table:ImaSk}, analyzed and numerically tested is based on parallel updates of the primal and dual variables $\*x$ and $\*y$. In the spirit of PDHG, we empirically explore a variant of \imask~based on the sequential update of the primal and dual variable and extrapolation on the primal variable $\*x$. \cref{table:ImaSk_seq} indicates the pseudo code for \imaskseq\ where line \ref{ref:dual_update} include the update of the dual variable $\*y$ which is used sequentially for the computation of the gradient update for the primal variable $\*x$ in line \ref{ref:gradient}. The extrapolation on the primal variable $\*x$ is implemented in line \ref{ref:extrapolation}. 
	
\begin{algorithm}[!h]
	\caption{Variant \imaskseq\ for CT image reconstruction, i.e., solve \cref{eq:min_obj}.} \label{table:ImaSk_seq}
	\begin{algorithmic}[1]
		\REQUIRE number of iterations $\It$, probabilities $p = (p_i)_{i=1}^r \in (0,1]^r$, step size $\sigma$, strong convexity of regularizer $\mu$
		\INPUT initial iterate $\*x^0 (= \*0) \in \R^d$, $\bar{\*x}^0 (= \*0) \in \R^d$, $\*y^0 (= \* 0) \in \R^m$, \newline memory $\*\phi^{0} = (\*K_i\*x^0)_{i=1}^r (= \*0) \in (\R^{d})^r$, $\*\psi^{0} = (\*K_i^*\*y^0)_{i=1}^r (= \*0) \in (\R^m)^r$
		\OUTPUT $\*x^\It$
		\FOR{$k = 0, \ldots, \It-1$}
		\STATE Sample $i_\it\in \{1, \dots, r\}$ at random with probability $p$.
		\STATE $\*\psi^{\it+1}_i = \begin{cases} \*K_{i_\it} \bar{\*x}^\it & \text{if $i = i_\it$} \\ \*\psi^\it_i & \text{else} \end{cases}$
		\STATE $\*\zeta^\it = \*\psi^{\it+1}_{i_\it} - \*\psi^\it_{i_\it} + \sum_{i=1}^r p_i \*\psi^\it_i$		
		\STATE $\*y^{\it+1} = (1 +\sigma)^{-1}(\*y^\it + \sigma \*\zeta^\it - \sigma\*b)$ \label{ref:dual_update}
		\STATE $\*\phi^{\it+1}_i = \begin{cases} \*K^T_{i_\it} \*y^{\it+1} & \text{if $i = i_\it$} \\ \*\phi^\it_i & \text{else} \end{cases}$\label{ref:gradient}
		\STATE $\*\xi^\it = \*\phi^{\it+1}_{i_\it} - \*\phi^\it_{i_\it} + \sum_{i=1}^r p_i \*\phi^\it_i$
		\STATE $\*x^{\it+1} = \mathrm{prox}_{\sigma \mu^{-1} R}\,\left( \*x^\it - \sigma \mu^{-1} \*\xi^\it\right)$
		\STATE  $ \bar{\*x}^{\it+1} = \*x^{\it+1} + \theta ( \*x^{\it+1} - \*x^{\it})$ \label{ref:extrapolation}
		\ENDFOR
	\end{algorithmic}
\end{algorithm}
	
For the numerical results, we have estimated the optimal step-size $\sigma$ and parameters $\theta$ of \imaskseq\ through a grid search, which resulted in $\sigma = 0.05, \theta=1$ for $r=4$. \cref{fig:Seq_TV_unif_dist_x}(a)-(c) shows respectively the distance to the optimizer, the PSNR in dB of \imask~with TV regularization and with uniform probability using the AAPM Thorax input image. Although the convergence bound derived in \cref{thm:convImaSk} assumes parallel update of the variables $\*x$ and $\*y$, the results in \cref{fig:Seq_TV_unif_dist_x}(a) for different number of resolutions show that \imask~is converging with non-strongly convex TV regularizer and sequential updates with a linear rate as for the \imask~parallel case. In terms of PSNR performance both \imask~with $r=[2, 4, 8]$ number of resolutions appears to converge noticeably faster respect to deterministic full resolution SAGA (blue curve, \imask~$r=1$). 
	
\begin{figure}[!ht]
	\centering
	\subfloat[Distance to optimality.]{
		\scalebox{0.78}{\includegraphics{./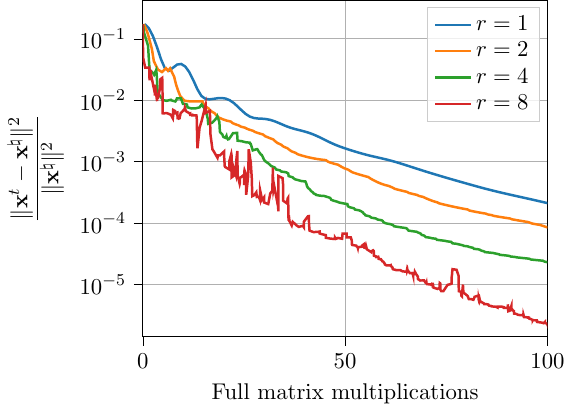}}}
	\quad
	\subfloat[Distance to ground truth $\*x^*$.]{
		\scalebox{0.78}{\includegraphics{./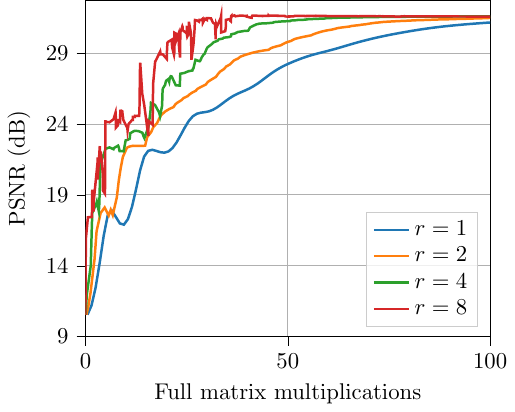}}}
	\caption{Quantitative ablation study for \imaskseq\ for AAPM image. (a) Distance of the \imaskseq~estimate $\*x^t$ respect to the PDHG solution $\*x^{\natural}$, (b) PSNR respect to $\*x^*$ in dB.} \label{fig:Seq_TV_unif_dist_x}
\end{figure}
	
Finally, we numerically compare performance either using \imask~with parallel primal-dual updates or \imaskseq~with sequential updates. We consider as input a simple image containing a square insert with boundaries corresponding to lower resolution dimension $2^{r-1}$. This choice allows the gradient in the horizontal and vertical direction to be the same for the full and lower resolution sketches. 

\Cref{fig:Comp_ImaSk_par_seq}(a)-(b) show the comparison respectively for the distance to optimality and distance to ground truth $\*x^*$ between \imask, \imaskseq, both $r=8$, \imask\ with $r=1$ and PDHG \cite{Chambolle2011pdhg} with optimal parameters
\begin{equation}
\rho = 0.99,\;\; \kappa = \sqrt{1 +\frac{\|\*K\|^2}{\mu  \rho^2}},\;\; \sigma = \frac{1}{\kappa - 1},\;\; \tau = \frac{1}{(\kappa - 1) \mu},\;\; \theta = 1 - \frac{2}{1 + \kappa}
\end{equation}
In \Cref{fig:Comp_ImaSk_par_seq}(a) it is important to highlight that the distance is computed respect to $\*x^{\natural}$ which is the PDHG solution and therefore the the red curve representing the rate of PDHG is much faster than all the \imask~variants. 
 
In terms of distance to ground truth $\*x^*$, \Cref{fig:Comp_ImaSk_par_seq}(b) shows that the variant \imaskseq\ noticeably improves over \imask\ and \imaskseq~$r=8$ (blue curve) is reaching the convergence faster at earlier iterations compared to PDHG (red curve).

\begin{figure}[!ht]
	\centering
	\subfloat[Distance to optimality.]{
		\scalebox{0.78}{\includegraphics{./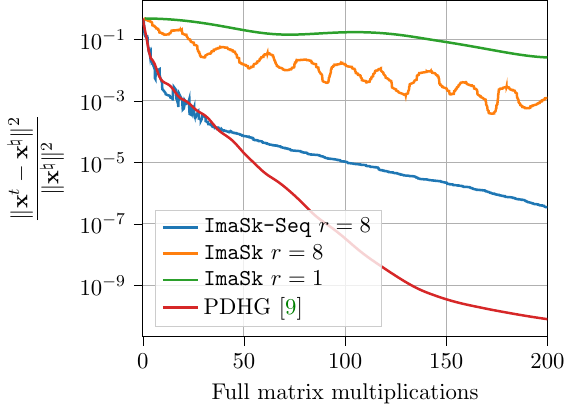}}}
	\quad
	\subfloat[Distance to ground truth $\*x^*$.]{
		\scalebox{0.78}{\includegraphics{./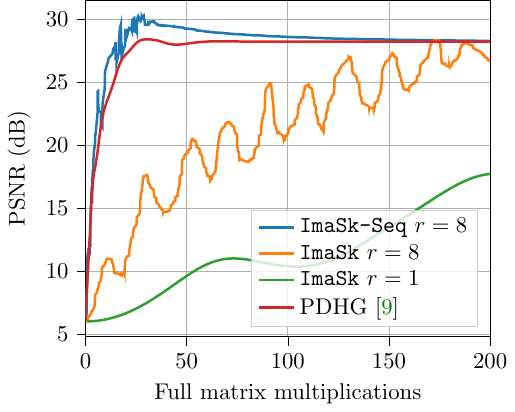}}}
    \caption{Comparison of \imask, \imaskseq\ with low-resolution square phantom for $r=8$: (a) distance to optimal solution $\*x^{\natural}$ (PDHG), (b) PSNR error respect to $\*x^*$ in dB.} \label{fig:Comp_ImaSk_par_seq}
\end{figure}

\section{Discussion}\label{sec:discussion}
	
The numerical simulations clearly show the benefit of using low resolution sketches to reduce the computational time to reach the convergence; although the theoretical bound are not tight with the numerical results it is worth noting how the slope of the numerical results for different number of resolutions are consistent with the bounds. In the analysis of the computation of the CT operator with ASTRA implementation, we have considered a fixed linear factor of 2 for the computation gain at lower resolutions but it emerges that at higher full resolution the linear factor (respect to lower iterations) is higher than 2, meaning that the computational gain using \imask~can further improve. Moreover, \imask~performance can noticeably improve for imaging system whose forward operators can enjoy a sub-linear time cost for lower resolutions. It is worth noting that although the theory requires the regularization function to be a $\mu_g$ strongly convex function, the numerical results show that also with the TV regularization with nonnegativity constraint the \imask~algorithm is converging.
	
\noindent The following aspects of the \imask~algorithm can be further analyzed in the future:
\begin{itemize}
	\item The current \imask~algorithm is derived and analyzed in the case where the operators both for the primal and dual update are selected using a common random variable from a single probability distribution. This framework can be further extended to the case where the primal and dual operators are selected using two independent random variables from the discrete probability distributions; this scenario models the case where primal and dual operators can have different low-resolution at the same iteration of the \imask~algorithm. This study can leas to understand what is the optimal sketch selection configuration to minimize the overall computational complexity of the algorithm.
		
	\item The current method can be extended to also incorporate random sketching operators in the measurement domain such as random row sub-selection which could in theory further reduce the overall computation. 
		
	\item Furthermore, the proposed multiresolution \imask~can be extended to non-linear measurement models which arise in many imaging applications such as spectral CT. 
\end{itemize}

\section{Conclusion}
In this work we presented a multi-resolution sketch algorithm, called \imask~to reduce the computational time fro solving inverse imaging problems. By using the saddle-point formulation of the original regularized optimization problem, we developed a convergent stochastic primal dual algorithm that is able to use at each iteration a low resolution deterministic sketch of the full resolution image and we theoretically and numerically analysed the case where the low resolutions are selected using a uniform discrete probability distribution. We prove that the \imask~algorithm converges to the same solution while reducing the overall time compared to using the full resolution operators. We presented the numerical results for the CT reconstruction problem using $L_2$ and non-smooth TV regularization. Finally, we introduced the \imaskseq~variant which uses the primal-dual sequential updates and we numerically show that it significantly improves computationally compared to \imask~and PDHG. Therefore, \imask~is a promising algorithm to be employed in high-dimensional image reconstruction problems and further extension can include non-linear imaging systems and the combination of measurement sub-selection to further improve the computational performance.

\bibliographystyle{siamplain}
\bibliography{references}

\end{document}